\theoremstyle{plain}
\newtheorem{thm}[subsection]{Theorem}
\newtheorem{prop}[subsection]{Proposition}
\newtheorem{lemma}[subsection]{Lemma}
\newtheorem{cor}[subsection]{Corollary}
\newtheorem*{thm*}{Theorem}
\newtheorem*{conj*}{Conjecture}
\theoremstyle{remark}
\newtheorem{rem}[subsection]{Remark}
\newtheorem*{rem*}{Remark}
\newtheorem*{ack}{Acknowledgments}
\theoremstyle{definition}
\newtheorem{dfn}[subsection]{Definition}
\newtheorem{ex}[subsection]{Example}
\newtheorem{problem}[subsection]{Problem}
\newcommand{\A}{\ensuremath{\mathcal A}}
\newcommand{\C}{\ensuremath{\mathbb C}}
\newcommand{\CC}{\ensuremath{\mathcal C}}
\newcommand{\Z}{\ensuremath{\mathbb Z}}
\newcommand{\g}{\ensuremath{\Gamma}}
\newcommand{\s}{\ensuremath{\mathfrak s}}
\renewcommand{\L}{\ensuremath{\Lambda}}
\newcommand{\Aut}{\operatorname{Aut}}
\newcommand{\X}{\ensuremath{\mathcal{X}}}
\renewcommand{\to}{\longrightarrow}
\renewcommand{\bar}{\overline}
\renewcommand{\phi}{\varphi}
\begin{document}

\title{On graphic arrangement groups}

\author[D. Cohen]{Daniel C. Cohen}
\address{Department of Mathematics, Louisiana State University, Baton Rouge, Louisiana 70803}
\email{\href{mailto:cohen@math.lsu.edu}{cohen@math.lsu.edu}}
\urladdr{\href{http://www.math.lsu.edu/~cohen/}
{www.math.lsu.edu/\char'176cohen}}

\author[M. Falk]{Michael J. Falk}
\address{Department of Mathematics and Statistics, Northern Arizona University, Flagstaff, Arizona 86011}
\email{\href{mailto:michael.falk@nau.edu}{michael.falk@nau.edu}}
\urladdr{\href{http://www.cefns.nau.edu/~falk/}
{www.cefns.nau.edu/\char'176falk}}
\thanks{M.F.~partially supported by Fulbright Global Scholar Award}


\begin{abstract}
A finite simple graph \g\ determines a quotient $P_\g$ of the pure braid group, called a graphic arrangement group. We analyze homomorphisms of these groups defined by deletion of sets of vertices, using methods developed in prior joint work with R.~Randell. 
We show that, for a $K_4$-free graph \g, a product of deletion maps is injective, embedding $P_\g$ in a product of free groups. Then $P_\g$ is residually free, torsion-free, residually torsion-free nilpotent, and acts properly on a CAT(0) cube complex. We also show $P_\g$ is of homological finiteness type $F_{m-1}$, but not $F_m$, where $m$ is the number of copies of $K_3$ in \g, except in trivial cases. The embedding result is extended to graphs whose 4-cliques share at most one edge, giving an injection of $P_\g$ into the product of pure braid groups corresponding to maximal cliques of \g. 
We give examples showing that this map may inject in more general circumstances. 
We define the graphic braid group $B_\g$ as a natural extension of $P_\g$ by the automorphism group of \g, and extend our homological finiteness result to these groups.
\end{abstract}

\keywords{pure braid group, hyperplane arrangement, graphic arrangement, homological finiteness type, $K_4$-free graph}

\subjclass[2010]{
20F36, 
32S22, 
52C35, 
20E26
}

\maketitle

\section{Introduction}
\label{intro}

For $1 \leq i < j \leq n$, let $H_{ij}$ be the linear hyperplane in $\C^n$ defined by the equation $x_i=x_j$.
 Let \g\ be a simple graph with vertex set $V=\{1,\ldots, n\}$ and edge set $E_\g \subseteq \binom{V}{2}$. The graphic arrangement determined by \g\ is the complex hyperplane arrangement $\A_\g=\{H_{ij} \mid i<j,  \{i,j\} \in E_\g\}$. The associated {\em graphic arrangement group}, or \emph{graphic pure braid group}, is the fundamental group $P_\g=\pi_1(U_\g)$ of the complement $U_\g=\C^n \setminus \bigcup_{H \in \A_\g} H$ of the graphic arrangement $\A_\g$. 

If $E_\g=\binom{V}{2}$, then $\g=K_n$ is the complete graph on $V$, $\A_\g$ is the braid arrangement of rank $n-1$, $U_\g$ is the configuration space of $n$ distinct labelled points in \C, and $P_\g$ is the $n$-strand pure braid group $P_n$. The Artin presentation of $P_n$ has generators $a_{ij}$ corresponding to the full twist of 
strands $i$ and $j$. 
In terms of the arrangement $\A_\g$, $a_{ij}$ is represented by a certain meridian about the hyperplane $H_{ij}$. For a general graph \g, the group $P_\g$ is the quotient of $P_n$ obtained by setting generators $a_{ij}$ equal to 1, for $\{i,j\} \not \in E_\g$. Thus elements of $P_\g$ are represented by pure braids, modulo level-preserving isotopies, in which pairs of strands corresponding to non-edges of \g\ are allowed to cross, or as a group of motions of points in the plane where those pairs of points are allowed to collide. 

The full braid group $B_n$ is an extension of $P_n$ by the symmetric group $S_n$:
\[
1 \to P_n \to B_n \to S_n \to 1.
\]
The surjection $B_n \to S_n$ is the natural one which sends a braid to the induced permutation of its set of endpoints. The automorphism group $\Aut(\g)$ of the graph \g\ is the subgroup of $S_n$ consisting of those permutations that induce permutations of $E_\g$. There is a corresponding subquotient $B_\g$ of $B_n$ which is an extension of $P_\g$ by $\Aut(\g)$:
\[
1 \to P_\g \to B_\g \to \Aut(\g) \to 1.
\]
We call $B_\g$ the {\em graphic full braid group} corresponding to \g; it is isomorphic to the orbifold fundamental group of the quotient of the ``partial configuration space" $U_\g$ by the action of $\Aut(\g)$ - see Section~\ref{full}.

The induced subgraph of \g\ on vertex set $X \subseteq V$  is the graph $\g_X$ with vertices $X$ and edge set $E_\g \cap \binom{X}{2}$. 
Then $\A_{\g_X}$ is the set of hyperplanes of $\A_\g$ containing the coordinate subspace defined by $x_i=0$ for $i \in V \setminus X$, hence is a localization, or flat, of $\A_\g$. 
One has an inclusion $U_\g \hookrightarrow U_{\g_X}$, inducing a homomorphism $\rho_X \colon P_\g \to P_{\g_X}$. If $\X \subseteq 2^V$ is a collection of pairwise incomparable subsets of $V$, denote by $\rho_\X$ the product homomorphism 
\[
\rho_\X=\textstyle{\underset{X \in \X}{\prod}} \, \rho_X \colon P_\g \longrightarrow \underset{X \in \X}{\displaystyle{\prod}} P_{\g_X}.
\]
These homomorphisms were analyzed in a general setting in Sections 3 and 4 of \cite{CFR10}. In this paper we apply the results that paper to deduce consequences concerning residual and homological finiteness properties of graphic pure and full braid groups, for some families of graphs.

A subset $X$ of $V$ is a {\em clique} of \g\ if $\binom{X}{2} \subseteq E_\g$; if $X$ has cardinality $k$ we may say $X$ is a $k$-clique. If $X$ is a clique of \g\ then $P_{\g_X}$ is isomorphic to the pure braid group $P_{|X|}$; since there is no risk of confusion, in this case we write $P_{\g_X}$ as $P_X$. A graph \g\ is {\em $K_4$-free} if \g\ has no cliques of cardinality four. 

A group $G$ is residually free (respectively, residually torsion-free nilpotent) if no nontrivial element lies in the kernel of every homomorphism from $G$ to a nontrivial free (respectively, torsion-free nilpotent) group.  The pure braid group $P_n$ is residually torsion-free nilpotent for every $n$ \cite{FR88,Mar12}, while the full braid group $B_n$ is not residually nilpotent for $n \geq 5$, since the commutator subgroup $[B_n,B_n]$ is perfect for these $n$ \cite{GL69}.  Since $P_4$ is not residually free \cite{CFR11}, neither $P_\g$ nor $B_\g$ is residually free if \g\ is not $K_4$-free.

Let \g\ be a $K_4$-free graph. Let $\X=\X(\g)$ be the family consisting of the 3-cliques and maximal 2-cliques of \g. Our main result is that the corresponding homomorphism $\rho_\X$ is injective. Since the groups $P_X$, $X \in \X$ are products of free groups, it follows that $P_\g$ is residually free, residually torsion-free nilpotent, residually finite, indeed linear, torsion-free, and has solvable word and conjugacy problems. As a subgroup of a right-angled Artin group,  $P_\g$ acts properly and freely (and $B_\g$ acts with finite stabilizers) on a CAT(0) cube complex, properly but not cocompactly. Consequently $P_\g$, and its finite extension $B_\g$, are a-T-menable groups \cite{CCJJV01}. 

Our argument extends to graphs containing 4-cliques, with the family \X\ including maximal $k$-cliques  of \g\ for $k=2,3,4$, 
provided no two 4-cliques intersect in a 3-clique. For these graphs the group $P_\g$ embeds in a product of free groups and copies of $P_4$, implying $P_\g$ is torsion-free, linear, residually finite, residually torsion-free nilpotent, and acts properly and freely on a CAT(0) complex. If \g\ is not $K_4$-free then $P_\g$ is not residually free. We give examples to show $\rho_\X$ may 
still be injective when the intersection condition is not satisfied. The analysis of those examples utilizes semidirect product structures on $P_\g$, arising from cliques of \g, which 
leads to the notion of a {\em graphic discriminantal arrangement} - see 
Section \ref{pure}.

A group $G$ is of type $F_m$ if and only if there is a $K(G,1)$ having the homotopy type of a CW-complex with finite $m$-skeleton \cite{Br82}. A further consequence of our main result is that, for a $K_4$-free graph \g, the group $P_\g$ is of type $F_{m-1}$ and not of type $F_m$, where $m$ is the number of 3-cliques in \g, provided the incidence graph of $E_\g$ with \X\ contains a cycle; if not then $\rho_\X$ is an isomorphism.  If \g\ is not $K_4$-free we are unable to draw conclusions about finiteness type due to the current lack of a precise description of the higher Bieri-Neumann-Strebel-Renz invariants of $P_4$, as noted in \cite{Zar17}.  
The full graphic braid group $B_\g$ contains $P_\g$ as a subgroup of finite index, hence $B_\g$ is of type $F_m$ if and only if $P_\g$ is. 
However, the group $B_\g$ need not be torsion-free nor residually nilpotent - see Section~\ref{full}. 

The paper is structured as follows. In Section \ref{purebraid} we recall the Artin presentations of $B_n$ and $P_n$ and collect some consequences needed for our analysis. Then in Section \ref{retractive} we recall the main results of \cite{CFR10}.  In Section~\ref{pure} we define graphic pure braid groups and establish that families of cliques have the requisite property to apply the method of \cite{CFR10}. Given a clique of \g, we identify a corresponding splitting of $P_\g$ as a semidirect product. In Sections~\ref{triangle} and \ref{finiteness} we prove the homomorphism $\rho_\X$ described above is injective, for graphic pure braid groups associated with $K_4$-free graphs, or with graphs containing 4-cliques with no common 3-cliques, and derive some consequences. We give an example showing that  
$\rho_\X$ may still be injective, even if the aforementioned hypotheses are not satisfied. 
Finally in Section~\ref{full} we define graphic braid groups and extend our finiteness results to those groups.

\section{Braids and pure braids}\label{purebraid}
Let $B_n$ be Artin's braid group on $n$ strands, with generators $\sigma_1, \ldots, \sigma_{n-1}$ satisfying the braid relations:
\[
\begin{array}{cl}
 \sigma_i\sigma_{i+1}\sigma_i = \sigma_{i+1}\sigma_i\sigma_{i+1}     &   \text{for  $1 \leq i \leq n-2$}  \\
 \sigma_i\sigma_j=\sigma_j\sigma_i \hskip11pt     &   \text{for $|i-j|\geq 2$}
\end{array}
\]
As a geometric braid, $\sigma_i$ is represented by a positive half twist of strands $i$ and $i+1$. 
See, for instance, \cite{Bi75} 
as a general reference on braids.

The braid group $B_n$ may also be realized as a subgroup of the automorphism group of the free group $F_n=\langle x_1,\dots,x_n\rangle$ via the Artin representation. In particular, the element of $\Aut(F_n)$ corresponding to the braid generator $\sigma_i$ is given by
\begin{equation} \label{eq:ArtinRep}
x_j \mapsto
\begin{cases}
x_i^{}x_{i+1}^{}x_i^{-1} & \text{if $j=i$,}\\
x_i^{} &\text{if $j=i+1$},\\
x_j^{} & \text{otherwise.}
\end{cases}
\end{equation}

Let $S_n$ be the group of permutations of $[n]=\{1, \ldots, n\}$. The function mapping $\sigma_i$ to the transposition $\s_i=(i,i+1)$ extends to a surjective homomorphism $p \colon B_n \to S_n$, whose kernel is by definition the $n$-strand pure braid group $P_n$. The Artin generators $a_{ij}$ of $P_n$ are given by 
\[
a_{ij}=\sigma_{j-1}^{} \cdots \sigma_{i+1}^{} \sigma_i^2 \sigma_{i+1}^{-1} \cdots \sigma_{j-1}^{-1} \ \text{for} \ 1 \leq i<j \leq n.
\]

Here and throughout the paper, we may write subscripted ordered pairs or sets as strings of letters or digits, without ambiguity, e.g., $a_{ij}$ means $a_{i,j}$. As a geometric braid, $a_{ij}$ is represented by a positive full twist 
of strands $i$ and $j$  (behind the intermediate strands). 
In terms of the braid arrangement, $a_{ij}$ is represented by a meridian loop about the hyperplane $H_{ij}$.

The pure braid group $P_n=F_{n-1} \rtimes \cdots \rtimes F_2 \rtimes F_1$ admits the structure of an iterated semidirect product of free groups. For $r<s$, the action of $P_r$ on $F_{s-1}=\langle x_1,\dots,x_{s-1}\rangle=\langle a_{1,s},\dots,a_{s-1,s}\rangle$ is given by the restriction of the Artin representation \eqref{eq:ArtinRep}. The iterated semidirect product structure is in evidence in the standard presentation of $P_n$, with generators $a_{i,j}$ as above, and relations
\begin{equation} \label{eq:PureBraidRels}
a_{rs}^{-1}a_{ij}a_{rs}^{}=
\begin{cases}
a_{ij}^{} & \text{if $r<s<i<j$,}\\
a_{rj}^{}a_{ij}^{}a_{rj}^{-1} 
& \text{if $r<s=i<j$,}\\
a_{rj}^{}a_{sj}^{}a_{rj}^{-1}a_{sj}^{-1} a_{ij}^{} a_{sj}^{}a_{rj}^{}a_{sj}^{-1}a_{rj}^{-1} 
& \text{if $r<i<s<j$,}\\
a_{rj}^{}a_{sj}^{}a_{ij}^{}a_{sj}^{-1}a_{rj}^{-1} 
& \text{if $r=i<s<j$,}\\
a_{ij}^{} & \text{if $i<r<s<j$.}
\end{cases}
\end{equation}

For each $m<n$, forgetting the last $n-m$ strands of an $n$-strand pure braid gives rise to a split extension
\begin{equation} \label{eq:Pnm}
1\to P_{n,m} \to P_n \to P_m \to 1.
\end{equation}
The group $P_{n,m}=F_{n-1} \rtimes \cdots \rtimes F_m$, generated by $a_{i,j}$ with $m<j$, may be realized as the fundamental group of the complement of the discriminantal arrangement  $\A_{n,m}$ (in the sense of \cite{SV91}). This arrangement may be realized with the (affine) hyperplanes 
\[
\{x_j = i \mid 1\le i \le m<j \le n\} \cup \{x_j=x_i \mid m<i<j\le n\}
\] 
in $\C^{n-m} \cong \{(1,\dots,m,x_{m+1},\dots,x_n)\} \subset \C^n$. 
The complement is the configuration space of $n-m$ distinct labelled points in $\C\setminus\{m\ \text{points}\}$. The split extension \eqref{eq:Pnm} may be obtained from the long exact homotopy sequence of the Fadell-Neuwirth bundle of configuration spaces 
\begin{equation*} \label{eq:FN}
U_{K_n} \to U_{K_m},\quad (x_1,\dots,x_n) \mapsto (x_1,\dots,x_m),
\end{equation*}
with fiber the complement of $\A_{n,m}$ in $\C^{n-m}$, which admits a section \cite{FN62}.

The presentation of $P_n$ with relations \eqref{eq:PureBraidRels} above is equivalent to the more compact ``Artin presentation'', as in \cite{MarMcC09}. 
The generating set is $\{a_{ij} \mid 1 \leq i < j \leq n\}$ and the 
relations arise from 3- and 4-element subsets of $V=\{1, \ldots, n\}$ as follows:
\begin{eqnarray}\label{commrelns}
{\textrm{(i)}} & [a_{ij},a_{rs}]=1 & \text{if $i<r<s<j$ or $r<s<i<j$};\nonumber \\
{\textrm{(ii)}} & [a_{ij}, a_{sj}a_{rs}a_{sj}^{-1}]=1 & \text{if if $r<i<s<j$};\\
{\textrm{(iii)}} & [a_{ij}a_{ir},a_{jr}]=[a_{ij},a_{ir}a_{jr}]=1 & \text{if $i<j<r$}.\nonumber 
\end{eqnarray}
When the conditions of (i) hold, or the same hold with $(i,j)$ and $(r,s)$ interchanged, we call the partition $ij|rs$  {\em non-crossing}; otherwise $ij|rs$ is {\em crossing}, and relation (ii) applies, again possibly with $(i,j)$ and $(r,s)$ interchanged.

\begin{rem} \label{rem:equiv pure braid rels}
For $r<s<j$, the pure braid relations (iii) imply that, for instance,  
$a_{sj}^{}a_{rs}^{}a_{sj}^{-1} = a_{rj}^{-1}a_{rs}^{}a_{rj}^{}$, since $a_{rs}^{}a_{rj}^{}a_{sj}^{}=a_{rj}^{}a_{sj}^{}a_{rs}^{}$. Using this and related observations, for $r<i<s<j$, (conjugates of) the pure braid relation (ii) may be expressed in the following equivalent ways:
\[
\begin{matrix}
[a_{ij}^{}, a_{ri}^{}a_{rs}^{}a_{ri}^{-1}]=1, & 
[a_{rs}^{}, a_{is}^{}a_{ij}^{}a_{is}^{-1}]=1, & 
[a_{rs}^{}, a_{rj}^{}a_{ij}^{}a_{rj}^{-1}]=1.
\end{matrix}
\]
\end{rem}

By definition, $P_n$ is a normal subgroup of $B_n$. For the purposes of Section~\ref{full} we will also need a particular description of the action of the braid generators $\sigma_i$ on $P_n$. 
\begin{prop}[\cite{CS97,DG81}] \label{conj}The conjugation action of $B_n$ on $P_n$ is given by
\[
\sigma_k^{} a_{i,j}^{}\sigma_k^{-1} = 
\begin{cases}
a_{i-1,i}^{}a_{i-1,j}^{}a_{i-1,i}^{-1}&\text{if $k=i-1$,}\\
a_{i+1,j}^{}&\text{if $k=i<j-1$,}\\
a_{j-1,j}^{}a_{i,j-1}^{}a_{j-1,j}^{-1}&\text{if $k=j-1>i$,}\\
a_{j,j+1}^{}&\text{if $k=j$,}\\
a_{i,j}&\text{otherwise,}
\end{cases}
\]
\end{prop}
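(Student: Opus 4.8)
The plan is to reduce each case to an identity among words in the generators $\sigma_k$ and then verify it using the braid relations, together with the defining formula $a_{ij}=\sigma_{j-1}^{}\cdots\sigma_{i+1}^{}\sigma_i^2\sigma_{i+1}^{-1}\cdots\sigma_{j-1}^{-1}$ and the far-commutativity $\sigma_k^{}\sigma_\ell^{}=\sigma_\ell^{}\sigma_k^{}$ for $|k-\ell|\ge 2$. Two structural observations do most of the work: the recursion $a_{ij}^{}=\sigma_{j-1}^{}a_{i,j-1}^{}\sigma_{j-1}^{-1}$ (valid when $j-1>i$), which merely peels off the outermost letters of the defining word, and the identity $\sigma_{j-1}^2=a_{j-1,j}^{}$. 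Whenever a reduced identity is not transparent, I would certify it by evaluating both sides on the free generators $x_1,\dots,x_n$ under the Artin representation \eqref{eq:ArtinRep}, which is faithful \cite{Bi75}.

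It is convenient first to record the induced action on $H_1(P_n)\cong\Z^{\binom n2}$: since $P_n$ is normal in $B_n$ and inner automorphisms act trivially on the abelianization, conjugation by $\sigma_k$ acts on $H_1(P_n)$ through the transposition $\s_k=(k,k+1)$, sending the class of $a_{ij}$ to that of $a_{\s_k(i)\,\s_k(j)}$. This pins down the unique unconjugated (``leading'') generator that may occur in each case, and shows in particular that for $k\notin\{i-1,i,j-1,j\}$ the class of $a_{ij}$ is fixed; in that range one checks $\sigma_k a_{ij}^{}\sigma_k^{-1}=a_{ij}^{}$ outright, the cases $k\le i-2$ and $k\ge j+1$ being immediate from commutation and the interior case $i+1\le k\le j-2$ following from a short braid-relation computation (geometrically, the full twist of strands $i,j$ passes behind the intermediate strands, so transposing two of them is free).

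The two ``right-end'' cases are then nearly formal. For $k=j$ the definition gives $\sigma_j^{}a_{ij}^{}\sigma_j^{-1}=\sigma_j^{}\big(\sigma_{j-1}^{}\cdots\sigma_i^2\cdots\sigma_{j-1}^{-1}\big)\sigma_j^{-1}=a_{i,j+1}^{}$ directly. For $k=j-1>i$ the recursion together with $\sigma_{j-1}^2=a_{j-1,j}^{}$ gives $\sigma_{j-1}^{}a_{ij}^{}\sigma_{j-1}^{-1}=\sigma_{j-1}^2\,a_{i,j-1}^{}\,\sigma_{j-1}^{-2}=a_{j-1,j}^{}a_{i,j-1}^{}a_{j-1,j}^{-1}$, as required.

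The main obstacle is the case $k=i-1$ (and, to a lesser extent, $k=i$), where the squared letter $\sigma_i^2$ sits in the interior of the defining word rather than at an end, so neither the recursion nor $\sigma_{j-1}^2=a_{j-1,j}^{}$ applies directly. Far-commutativity strips off the outer factors $\sigma_{j-1}^{}\cdots\sigma_{i+2}^{}$, which commute with both $\sigma_i^{}$ and $\sigma_{i-1}^{}$. For $k=i$ this reduces cleanly to the three-strand identity $\sigma_i^{}\sigma_{i+1}^{}\sigma_i^2\sigma_{i+1}^{-1}\sigma_i^{-1}=\sigma_{i+1}^2$, giving $a_{i+1,j}^{}$ with no conjugating factor. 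For $k=i-1$ one is left with $\sigma_{i-1}^{}\sigma_i^2\sigma_{i-1}^{-1}$, which must be rewritten as $a_{i-1,i}^{}a_{i-1,i+1}^{}a_{i-1,i}^{-1}$ and the factor $a_{i-1,i}^{}$ propagated back out through the stripped letters to reach $a_{i-1,i}^{}a_{i-1,j}^{}a_{i-1,i}^{-1}$; this bookkeeping is the crux, and it is exactly here that I would lean on faithfulness of \eqref{eq:ArtinRep} to certify the rewriting. One might hope to halve the labor with the order-reversing automorphism $\sigma_k\mapsto\sigma_{n-k}$ of $B_n$, but it sends $a_{ij}$ only to a \emph{conjugate} of a standard generator, so it does not bypass this computation. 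As a final consistency check, the resulting formulas should be matched against the pure braid relations \eqref{commrelns}, which encode precisely the interactions among $a_{i-1,i}^{}$, $a_{i-1,j}^{}$, and $a_{ij}^{}$ responsible for the conjugating factors in the crossing cases.
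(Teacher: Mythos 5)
The paper offers no proof of Proposition~\ref{conj} at all: it is quoted from \cite{CS97,DG81}. Your direct verification from the defining word $a_{ij}=\sigma_{j-1}\cdots\sigma_{i+1}\sigma_i^2\sigma_{i+1}^{-1}\cdots\sigma_{j-1}^{-1}$ is therefore a different, self-contained route, and its skeleton is sound: far-commutativity strips the conjugation down to a local identity among at most four consecutive letters; the three-strand identity $\sigma_i^{}\sigma_{i+1}^{}\sigma_i^2\sigma_{i+1}^{-1}\sigma_i^{-1}=\sigma_{i+1}^2$ settles $k=i$; the recursion $a_{i,j+1}^{}=\sigma_j^{}a_{ij}^{}\sigma_j^{-1}$ together with $\sigma_{j-1}^2=a_{j-1,j}^{}$ settles $k=j$ and $k=j-1$; and the interior case $i<k<j-1$ is one application of the braid relation to the block $\sigma_k^{}\sigma_{k+1}^{}\sigma_k^{}$. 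One simplification: your ``crux'' at $k=i-1$ is less delicate than you fear. After stripping, one needs $\sigma_{i-1}^{}\sigma_i^2\sigma_{i-1}^{-1}=a_{i-1,i}^{}a_{i-1,i+1}^{}a_{i-1,i}^{-1}$, a two-line consequence of $\sigma_{i-1}^{}\sigma_i^{}\sigma_{i-1}^{}=\sigma_i^{}\sigma_{i-1}^{}\sigma_i^{}$; and the emerging factor $a_{i-1,i}^{}=\sigma_{i-1}^2$ commutes with every stripped letter $\sigma_{i+1}^{},\dots,\sigma_{j-1}^{}$, so it passes back out for free while the prefix converts $a_{i-1,i+1}^{}$ into $a_{i-1,j}^{}$. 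No appeal to faithfulness of the Artin representation is needed anywhere.

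The one substantive issue is in the case $k=j$, and it is one your own $H_1$ bookkeeping was designed to catch: your computation (correctly) yields $\sigma_j^{}a_{ij}^{}\sigma_j^{-1}=a_{i,j+1}^{}$, whereas the proposition as printed asserts $a_{j,j+1}^{}$. These disagree: on $H_1(P_n)$ the conjugate must map to the class of $a_{i,j+1}^{}$, which the class of $a_{j,j+1}^{}$ is not when $i<j$. So the fourth case of the statement is a misprint for $a_{i,j+1}^{}$, and you silently proved the corrected version without flagging the discrepancy. Since you explicitly set up the abelianization check precisely to ``pin down the leading generator,'' you should have run it against the printed statement and recorded that the statement, not your computation, is at fault; as written, a reader comparing your conclusion to the proposition would think your proof fails in that case.
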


Finally, for the purposes of Section~\ref{finiteness}, we recall the fact \cite{Bi75} that the center $Z(P_n)$ of the pure braid group is infinite cyclic, generated by the  
full twist 
\[
\Delta^2=a_{12}a_{13}a_{23}a_{14}a_{24}a_{34}\cdots\cdots a_{1,n} \cdots a_{n-1,n}.
\] 
The quotient $P_n/Z(P_n)$ is isomorphic to the fundamental group of the complement of the projectivization of the 
rank $n-1$ 
braid arrangement.

\section{Retractive families}\label{retractive}
Let $G$ be a group with finite generating set $Y$. For $S \subseteq Y$ denote by $G_S$ the quotient of $G$ by the normal closure of the complement $Y\setminus S$. That is, $G_S$ is obtained from a presentation of $G$ by adding the relations $y=1$ for generators $y \in Y$ lying outside of $S$. Let $\rho_S \colon G \to G_S$ denote the canonical quotient map. If $\X \subseteq 2^Y$, let
\[
\rho_\X =\textstyle{\underset{S \in \X}{\prod}} \, \rho_S \colon G \to \underset{S \in \X}{\displaystyle{\prod}} G_S.
\]

\begin{dfn} A subset $S \subseteq Y$ is {\em retractive} if the composite 
\[
\langle S \rangle \hookrightarrow G \overset{\rho_S}{\longrightarrow} G_S
\]
is injective. A {\em retractive family} is a family $\X \subseteq 2^Y$ of pairwise incomparable nonempty sets with the property that every nonempty intersection of subsets of \X\ and every singleton $\{y\}$ for $y \in \bigcup \X$ is retractive.
\end{dfn}

If the abelianization $G/[G,G]$ is free abelian of rank $|Y|$, then the singleton $\{y\}$ is a retractive set for every $y \in Y$. This will be the case in our setting. We will also use the following criterion. Let $F(Y)$ denote the free group on the set $Y$. If $S \subseteq Y$ we consider $F(S)$ to be a subgroup of $F(Y)$. Let $\pi_S \colon F(Y) \to F(S)$ be defined by 
\[
\phi_S(y)=\begin{cases}y & \text{if} \ y \in S;\\ 1 & \text{if} \ y \not \in S.\end{cases}
\]

\begin{prop}\label{conjfree} Suppose $G$ has presentation $\langle Y \mid R \rangle$, $R \subseteq F(Y)$, and $S \subseteq Y$. Let $R_S=\phi_S(R)$. If $R_S \subseteq R$ then $S$ is retractive.
\end{prop}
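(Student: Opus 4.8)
The plan is to produce a left inverse to the composite $\langle S\rangle \hookrightarrow G \overset{\rho_S}{\longrightarrow} G_S$ from the definition of retractive, namely a homomorphism $\theta\colon G_S \to G$ whose restriction to $\langle S\rangle$ is the inclusion. A homomorphism admitting a left inverse is injective, so exhibiting such a $\theta$ proves the proposition.

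To construct $\theta$ I would invoke the universal property of $G_S$. By definition $G_S$ is the quotient of $F(Y)$ by the normal closure of $R\cup(Y\setminus S)$, so giving a homomorphism $G_S\to G$ amounts to giving a homomorphism $\psi\colon F(Y)\to G$ that kills every generator $y\in Y\setminus S$ and every element of $R$. The natural candidate is the composite $\psi\colon F(Y)\overset{\phi_S}{\longrightarrow} F(S)\hookrightarrow F(Y)\to G$, so that $\psi(y)=y$ for $y\in S$ and $\psi(y)=1$ otherwise; this manifestly kills $Y\setminus S$.

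The one substantive point, and the only place the hypothesis enters, is that $\psi$ also kills $R$. For $w\in R$, the value $\psi(w)$ is the image in $G$ of $\phi_S(w)$. Now $\phi_S(w)\in R_S$ by definition of $R_S$, and the hypothesis $R_S\subseteq R$ then gives $\phi_S(w)\in R$; being a defining relator of $G$, it maps to $1$ in $G$, so $\psi(w)=1$. Hence $\psi$ descends to a homomorphism $\theta\colon G_S\to G$ with $\theta(s)=s$ for every $s\in S$.

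Finally, $\theta\circ\rho_S\colon G\to G$ fixes each generator $s\in S$, so its restriction to $\langle S\rangle$ is the inclusion $\langle S\rangle\hookrightarrow G$, which is injective; hence the composite $\langle S\rangle\to G_S$ is injective and $S$ is retractive. I anticipate no real difficulty: the whole argument is formal once the well-definedness of $\theta$ is verified, and that verification is exactly what $R_S\subseteq R$ supplies. Equivalently, killing the generators in $Y\setminus S$ transforms the presentation $\langle Y\mid R\rangle$ into $\langle S\mid R_S\rangle$; the hypothesis says these relators already hold in $G$, which is precisely what makes $\theta$ well defined.
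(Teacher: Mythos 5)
Your proof is correct and is essentially the paper's own argument: the paper's one-line proof says exactly that the hypothesis $R_S \subseteq R$ makes the identity on $S$ induce a well-defined homomorphism $G_S \to \langle S \rangle$ inverse to the quotient map, and your construction of $\theta$ via the presentation $F(Y)/\langle\langle R \cup (Y \setminus S)\rangle\rangle$ is just that argument written out in full detail. The verification that $\psi$ kills $R$ because $\phi_S(w) \in R_S \subseteq R$ is precisely where the paper's hypothesis enters, so nothing is missing.
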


\begin{proof} The hypothesis implies that the identity map $S \to S$ induces a well-defined homomorphism $G_S \to \langle S \rangle$ inverse to the quotient map $\langle S \rangle \to G_S$. 
\end{proof}

\begin{cor} \label{cor:semidirect}
Let $G= G_2 \rtimes G_1$ be a semidirect product of groups, with $G_1$ acting on $G_2$, and let $Y=Y_1 \cup Y_2$ be a generating set for $G$ with $Y_i \subseteq G_i$ for $i=1,2$. Then $Y_1$ is retractive.
\end{cor}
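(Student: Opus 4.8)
The plan is to prove the statement directly from the retraction underlying the semidirect product structure, rather than by producing a presentation and invoking Proposition~\ref{conjfree} (the latter is awkward here, since applying $\phi_{Y_1}$ to the relations coming from $G_2$ and from the action of $G_1$ on $G_2$ yields trivial words, and one would have to worry about whether those literally lie in $R$). By the definition of retractive, what I must show is that the composite $\langle Y_1 \rangle \hookrightarrow G \overset{\rho_{Y_1}}{\longrightarrow} G_{Y_1}$ is injective, where $G_{Y_1}$ is $G$ modulo the normal closure of $Y \setminus Y_1$.

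First I would record two easy facts. Since $Y_1 \subseteq G_1$ and $G_1$ sits inside $G$ as a complementary factor of the semidirect product, the subgroup $\langle Y_1 \rangle$ generated by $Y_1$ in $G$ is a subgroup of $G_1$. Second, the canonical projection $q \colon G = G_2 \rtimes G_1 \to G_1$, with kernel $G_2$, restricts to the identity on $G_1$; in particular $q|_{\langle Y_1 \rangle}$ is the inclusion $\langle Y_1 \rangle \hookrightarrow G_1$, hence is injective.

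The key step is to factor $q$ through $\rho_{Y_1}$. Because $Y \setminus Y_1 \subseteq Y_2 \subseteq G_2 = \ker q$ and $G_2$ is normal in $G$, the normal closure of $Y \setminus Y_1$ is contained in $\ker q$. Therefore $q$ descends to a homomorphism $\bar q \colon G_{Y_1} \to G_1$ with $\bar q \circ \rho_{Y_1} = q$. Restricting this identity to $\langle Y_1 \rangle$ gives $\bar q \circ \bigl(\rho_{Y_1}|_{\langle Y_1 \rangle}\bigr) = q|_{\langle Y_1 \rangle}$, which is injective by the previous paragraph; hence $\rho_{Y_1}|_{\langle Y_1 \rangle}$ is injective as well, and $Y_1$ is retractive.

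I do not expect a genuine obstacle: the only point needing a moment's care is the containment of the normal closure of $Y \setminus Y_1$ in $\ker q$, and this is immediate from $Y \setminus Y_1 \subseteq G_2$ together with the normality of $G_2$ in $G$. It is worth flagging that the corresponding assertion for $Y_2$ would not follow by this method, since $\langle Y_2 \rangle$ need not be normal and there is no complementary retraction onto $G_2$; the asymmetry of the semidirect product is exactly what makes $Y_1$, and not $Y_2$, automatically retractive.
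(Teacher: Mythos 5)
Your proof is correct, and it takes a different (and arguably cleaner) route than the paper intends. The paper offers no separate proof of Corollary~\ref{cor:semidirect}: it is meant to follow from Proposition~\ref{conjfree}, i.e., one chooses a presentation of $G$ adapted to the semidirect product and checks the condition $R_{Y_1}=\phi_{Y_1}(R)\subseteq R$. The awkwardness you flag there is real but surmountable: one may take $R$ to be the full kernel of $F(Y)\to G$ (or simply adjoin the trivial relator), and then $\phi_{Y_1}(R)\subseteq R$ holds because the evaluation of $\phi_{Y_1}(w)$ in $G$ equals $q(w)$ for the projection $q\colon G\to G_1$ --- both maps send $y_1\mapsto y_1$ and $y_2\mapsto 1$ on generators. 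So the presentation-level verification secretly uses exactly the projection you work with; your argument strips away the presentation and factors $q$ through $\rho_{Y_1}$ directly, which is what both proofs ultimately construct: a retraction of $G_{Y_1}$ onto $\langle Y_1\rangle$. Two small remarks. First, your containment $\langle Y_1\rangle\le G_1$ can be sharpened: since $q$ is surjective and kills $Y_2$, in fact $\langle Y_1\rangle = q(\langle Y\rangle)=G_1$, though your proof does not need this. Second, your closing observation about the asymmetry is apt and consistent with the paper's framework --- $G_2$ is the normal closure of $Y_2$ rather than the subgroup it generates, and there is no projection onto $G_2$, so $Y_2$ is not automatically retractive by either method; this is exactly why the paper's applications (Theorem~\ref{thm:Pg semi}, Proposition~\ref{modular}) always put the retracted generators in the acting factor.
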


If $S \subseteq Y$ is a retractive set, we tacitly identify $G_S$ with the subgroup $\langle S \rangle$ of $G$. If $\X \subseteq 2^Y$ is a family of pairwise incomparable sets, we say a subset of $Y$ is {\em transverse} to \X\ if it is not a subset of any element of \X. We will apply the following result from \cite[Section 3]{CFR10}.

\begin{thm}\label{inj} Suppose $G$ is a group with generating set $Y$ and $\X \subseteq 2^Y$ is a retractive family with $Y=\bigcup \X$. Then the homomorphism
\[
\rho_\X \colon G \to \underset{S \in \X}{\displaystyle{\prod}} G_S
\]
is injective if 
\begin{enumerate}
\item $[a,b]=1$ for all $a, b \in Y$ with $\{a,b\}$ transverse to \X, and
\item $[[G_S,G_S],y]=1$ for all $S \in \X$ and $y \in Y\setminus S$.
\end{enumerate}
\end{thm}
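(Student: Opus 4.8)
The plan is to prove injectivity by showing that the kernel of $\rho_\X$ is trivial. First I would invoke retractiveness to identify each $G_S$ with the subgroup $\langle S\rangle\le G$, so that $\rho_S\colon G\to\langle S\rangle$ becomes a genuine retraction (restricting to the identity on $\langle S\rangle$) and $\ker\rho_S$ is exactly the normal closure of $Y\setminus S$. The kernel of $\rho_\X$ is then $\bigcap_{S\in\X}\ker\rho_S$, and the whole theorem reduces to showing this intersection is trivial. As a warm-up constraint, I note that each singleton map $\rho_{\{y\}}$ factors through any $\rho_S$ with $y\in S$, so any $g$ in the kernel dies in every cyclic quotient $G_{\{y\}}$; under the free-abelian abelianization present in our applications this already places $g$ in $[G,G]$, which is where conditions (1) and (2) gain traction.

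The main engine I would set up is an induction on $|\X|$, peeling off one maximal member $S_0$ and comparing $G$ with the subgroup $G'=\langle\bigcup_{S\ne S_0}S\rangle$ carrying the restricted family $\X'=\X\setminus\{S_0\}$. Condition (1) is what organizes the generators for this comparison: any generator lying \emph{only} in $S_0$ and any generator lying only in some other member are transverse to $\X$, hence commute, so a word for $g$ can be collected into an $\langle S_0\rangle$-part and a $G'$-part, up to commutators supported on the overlaps $S_0\cap S$. The role of the hypothesis that \emph{every} nonempty intersection in $\X$ is retractive is precisely to guarantee that $\X'$ and the family obtained by intersecting the members of $\X'$ with $S_0$ are again retractive, so that the inductive hypothesis applies to $G'$ and, after quotienting, to the smaller configurations built from the intersections.

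The crux, and the place where condition (2) is essential, is controlling the commutator error terms produced by the collection in the previous step. These terms live in $[\langle S\rangle,\langle S\rangle]$ interacting with generators outside $S$, and condition (2) says exactly that such second-order commutators are centralized by every generator of $Y\setminus S$; this is what forces the error terms either to vanish or to be absorbed into a single $\langle S\rangle$, where they are then visible to the retraction $\rho_S$. I expect this commutator bookkeeping to be the hard part: one must show that the collection terminates and that no nontrivial element can remain hidden simultaneously in all the kernels $\ker\rho_S$, i.e.\ that the combination of (1) and (2) makes every surviving piece of $g$ detectable by some $\rho_S$. Once that is in hand, $g\in\ker\rho_\X$ forces every such piece to vanish, whence $g=1$ and $\rho_\X$ is injective.
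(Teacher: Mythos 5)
Your reduction to $\ker\rho_\X=\bigcap_{S\in\X}\ker\rho_S$ and your reading of the roles of hypotheses (i) and (ii) are accurate, but the proposal has a genuine gap exactly where you flag it: the ``commutator bookkeeping'' you defer is not a cleanup step, it is the entire content of the theorem. Moreover, the scaffold you propose for it --- induction on $|\X|$, peeling off a member $S_0$ and passing to $G'=\langle\bigcup(\X\setminus\{S_0\})\rangle$ with the family $\X'=\X\setminus\{S_0\}$ --- does not work as stated, because the hypotheses are not inherited by $(G',\X')$. A pair $\{a,b\}$ with $a,b\in\bigcup\X'$ can lie inside $S_0$, hence be transverse to $\X'$ without being transverse to $\X$; hypothesis (i) for $(G',\X')$ would then demand $[a,b]=1$, which is not given (in the graphic setting such a pair typically generates a nonabelian free subgroup of $\langle S_0\rangle$). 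Retractiveness of the members of $\X'$ relative to the subgroup $G'$ is likewise not supplied by the retractive-family hypothesis, which concerns quotients of $G$ itself. Finally, your collection step requires moving generators of $S_0\cap S$ past other generators of $S_0$; hypothesis (ii) only centralizes the second derived level $[\langle S\rangle,\langle S\rangle]$ against generators \emph{outside} $S$, not the first-order commutators $[a,c]$ with $a,c\in S_0$ that are the actual error terms of collection, so nothing guarantees the process terminates or that residual pieces are visible to some $\rho_S$.

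For comparison, the argument this paper relies on (Theorem~\ref{inj} is quoted from \cite[Section 3]{CFR10} and not reproved here) uses a different decomposition of the kernel: one first proves that $\ker\rho_\X$ is generated by \emph{monic commutators transverse to $\X$}, i.e.\ iterated brackets $\beta^p(y_1^{\epsilon_1},\dots,y_p^{\epsilon_p})$ in the generators whose support is contained in no single $S\in\X$ (see \cite[Thm.~3.2.13]{CFR10}). Injectivity then follows by induction on the bracket weight $p$: the base case $p=2$ is precisely hypothesis (i), and for larger $p$ one uses hypothesis (ii) together with normality considerations and the Hall--Witt identity to kill each transverse bracket. That this is the intended engine is visible within the paper itself: Example~\ref{ex:NOTinj} carries out exactly this weight induction by hand, Hall--Witt identity included, in a situation where hypothesis (ii) fails and Theorem~\ref{inj} cannot be invoked. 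To complete your write-up, the missing lemma to prove is the transverse-commutator generation statement for $\ker\rho_\X$; once that is in place, your observations about (i) and (ii) assemble correctly into the weight induction, and the induction on $|\X|$ can be discarded.
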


\section{Graphic pure braid groups}\label{pure}
Let \g\ be a graph with vertex set $V=[n]=\{1, \ldots, n\}$ and edge set $E_\g \subseteq \binom{V}{2}$. Let  $\bar{E}_\g=\binom{V}{2} \setminus E_\g$ denote the set of edges of the complete graph $K_n$ not in \g.
Let $\langle \langle S \rangle\rangle$ be the normal closure of a subset $S$ of a group.

\begin{dfn} The \emph{graphic arrangement group} or 
{\em graphic pure braid group} associated with \g\ is the quotient
\[
P_\g=P_n/\langle\langle a_{ij}  \mid \{i,j\} \in \bar{E}_\g \rangle\rangle.
\]  
\end{dfn}

Using the Van Kampen theorem one can show that $P_\g$ is isomorphic to the arrangement group $\pi_1(U_\g)$, where $U_\g = \C^n \setminus \bigcup_{H \in \A_\g} H$ as in the Introduction.

To simplify notation, we will denote the image of $a_{ij}$ in quotients of $P_n$ also by $a_{ij}$, with the meaning made clear from the context.  We will apply the results of Section~\ref{retractive} to $P_\g$ with its generating set $Y=\{a_{ij} \mid \{i,j\} \in E_\g\}$.  

The graphic pure braid group $P_\g$ admits semidirect product structure analogous to that of the pure braid group exhibited in \eqref{eq:Pnm}, arising from a clique in \g. Recall the subgroup $P_{n,m} =\ker(P_n \to P_m)$ from Section~\ref{purebraid}.

\begin{thm} \label{thm:Pg semi}
Let $X=[m]$ be a clique in \g, and let $N=\langle\langle a_{ij}  \mid \{i,j\} \in \bar{E}_\g \rangle\rangle$ in $P_n$. Then there is a split extension
\[
1\to P_{n,m}/N \to P_\g \to P_m \to 1.
\]
\end{thm}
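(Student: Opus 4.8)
The plan is to derive the sequence by pushing the split extension \eqref{eq:Pnm} through the quotient map $q\colon P_n \twoheadrightarrow P_\g=P_n/N$. The one substantive point is the inclusion $N\subseteq P_{n,m}$. To establish it, recall that $P_{n,m}=\ker(\pi)$, where $\pi\colon P_n\to P_m$ forgets the last $n-m$ strands, so that $\pi(a_{ij})=a_{ij}$ for $j\le m$ and $\pi(a_{ij})=1$ for $j>m$. Because $X=[m]$ is a clique, every non-edge $\{i,j\}\in\bar{E}_\g$ has $j>m$: otherwise $i<j\le m$ would force $\{i,j\}\in\binom{X}{2}\subseteq E_\g$, a contradiction. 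Hence each generator $a_{ij}$ of $N$ lies in $\ker(\pi)=P_{n,m}$, and since $P_{n,m}$ is normal in $P_n$, it contains the normal closure $N$.

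Given $N\subseteq P_{n,m}$, the map $\pi$ annihilates $N$ and therefore factors through $q$ as a surjection $\bar\pi\colon P_\g\to P_m$. A routine verification identifies its kernel: $gN\in\ker(\bar\pi)$ iff $\pi(g)=1$ iff $g\in P_{n,m}$, so $\ker(\bar\pi)=q(P_{n,m})$, which is exactly $P_{n,m}/N$ (the quotient makes sense since $N$, being normal in $P_n$, is normal in $P_{n,m}$). This produces the exact sequence $1\to P_{n,m}/N\to P_\g\to P_m\to 1$.

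To split it, I would take the section $s\colon P_m\to P_n$ of \eqref{eq:Pnm} coming from the Fadell--Neuwirth fibration, which realizes $P_m$ as the subgroup $\langle a_{ij}\mid j\le m\rangle$ of $P_n$, and set $\bar s=q\circ s\colon P_m\to P_\g$. Since $\bar\pi\circ q=\pi$ by construction, one computes $\bar\pi\circ\bar s=\pi\circ s=\mathrm{id}_{P_m}$, so $\bar s$ is the required splitting. (Note that the generators $a_{ij}$ with $j\le m$ survive in $P_\g$ precisely because $[m]$ is a clique, so that $\bar s$ is honestly the inclusion of a subgroup isomorphic to $P_m$.)

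I expect no serious obstacle: the entire content sits in the inclusion $N\subseteq P_{n,m}$, which is the only place the clique hypothesis enters, and the remaining steps are formal diagram chasing. The single point meriting care is confirming that $P_{n,m}/N$ is genuinely the kernel of $\bar\pi$, i.e., that $N$ is normal in $P_{n,m}$ so the quotient and the identification with $q(P_{n,m})$ are legitimate --- but this is immediate from normality of $N$ in $P_n$.
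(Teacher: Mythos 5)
Your proposal is correct and follows essentially the same route as the paper: both hinge on the observation that the clique hypothesis forces every non-edge $\{i,j\}$ with $i<j$ to have $j>m$, so $N\subseteq P_{n,m}=\ker(P_n\to P_m)$, after which the sequence is obtained by the third isomorphism theorem (your kernel computation for $\bar\pi$ is this in disguise) and the splitting of \eqref{eq:Pnm} descends via $q$. Your explicit check $\bar\pi\circ\bar s=\pi\circ s=\mathrm{id}_{P_m}$ makes precise what the paper states as ``one observes that the splitting descends,'' so nothing is missing.
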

\begin{proof}
Note that, for $\{i,j\}\in \bar{E}_\g$ with $i<j$, we have $m<j$. Then $a_{ij} \in \ker(P_n \to P_m)=P_{n,m}$. It follows that $N$ is, in fact, a (normal) subgroup of $P_{n,m}$. 
So by the third isomorphism theorem, we have $(P_n/N)/(P_{n,m}/N) \cong P_n/P_{n,m} \cong P_m$, yielding a short exact sequence as asserted. One observes that the splitting of the pure braid sequence \eqref{eq:Pnm} descends to a splitting of the sequence 
$1\to P_{n,m}/N \to P_\g \to P_m \to 1$, completing the proof.
\end{proof}

\begin{rem} \label{rem:deleted disc}
The group $P_{n,m}^\g=P_{n,m}/N$ is also an arrangement group, isomorphic to the fundamental group of the complement of the arrangement 
obtained from the discriminantal arrangement $\A_{n,m}$ by removing the hyperplanes $x_j=i$ for $1 \leq i \leq m<j\le n$, and $x_j=x_i$ for $m<i<j\leq n$, when $\{i,j\} \in \bar{E}_\g$. This is the {\em graphic discriminantal arrangement} $\A_{n,m}^\g$, associated to a graph \g\ on $n$ vertices with a given $m$-clique. 
\end{rem}

As shown in \cite[Lemma 4.2]{Ku93}, the edge sets of cliques of \g\ are precisely the connected modular flats of the cycle matroid of \g. So the clique $X$ of Theorem~\ref{thm:Pg semi} corresponds to a modular flat of the graphic arrangement $\A_\g$. The split exact sequence of this theorem arises from the linear fibering of arrangement complements associated with a modular flat \cite{Par00}. 
Combined with \cite[Section 4]{FaPr02}, we obtain the following generalization. A graph \g\ is the {\em generalized parallel connection} of its induced subgraphs $\g_1$ and $\g_2$ along $X$ if $E_\g=E_{\g_1} \cup E_{\g_2}$  and $E_{\g_1} \cap E_{\g_2}=\binom{X}{2}$. In particular, $X$ is a clique, and hence a modular flat in $\g_1$, $\g_2$, and \g. Write $\g = \g_1 \cup_X \g_2$. For $i=1,2$, denote the vertex set of $\g_i$ by $V_i$ and let $n_i=|V_i|$.

\begin{prop} \label{prop:slf}
Suppose $\g= \g_1 \cup_X \g_2$, with $X=V_1 \cap V_2$ an $m$-clique. Let $\X=\{V_1,V_2\}$. 
Then 
\begin{enumerate}
\item there is a pullback diagram of groups
 \[
\xymatrix{
P_\g \ar[r]^-{\rho_{V_1}} \ar[d]_{\rho_{V_2}} & P_{\g_1}\ar[d]^{\rho_X}\\
P_{\g_2} \ar[r]^-{\rho_{X}} &  P_X;
}
\]
\item there is a commuting diagram of split short exact sequences
\[
\xymatrix{
1 \ar[r] & P_{n_1,m}^{\g_1} \ar[r] \ar[d]_{\mathrm{id}} & P_\g \ar[r]^{\rho_{V_2}} \ar[d]^{\rho_{V_1}} & P_{\g_2}  \ar[r] \ar[d]^{\rho_X} & 1 \\
1 \ar[r] & P_{n_1,m}^{\g_1} \ar[r]  & P_{\g_1} \ar[r]^{\rho_X} & P_X  \ar[r] & 1; 
}
\]
\item the map $\rho_\X \colon P_\g \to P_{\g_1} \times P_{\g_2}$ is injective.
\end{enumerate}
\end{prop}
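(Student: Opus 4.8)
The plan is to obtain injectivity of $\rho_\X=(\rho_{V_1},\rho_{V_2})$ as a formal consequence of the structural statements in parts (i) and (ii), where essentially all of the real work has already been done; for (iii) itself only a short diagram chase on the kernel remains. I would present the argument primarily via part (ii) and remark that it also drops out of the universal property in part (i).

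First I would work with the commuting diagram of split short exact sequences in part (ii). Let $g\in\ker\rho_\X$, so that $\rho_{V_1}(g)=1$ and $\rho_{V_2}(g)=1$. Exactness of the top row shows that $\rho_{V_2}(g)=1$ forces $g$ into the kernel $P_{n_1,m}^{\g_1}=\ker\rho_{V_2}$. The left-hand square commutes and its vertical map is the identity on $P_{n_1,m}^{\g_1}$, so reading the square off shows that the restriction of $\rho_{V_1}$ to this kernel is precisely the inclusion $P_{n_1,m}^{\g_1}\hookrightarrow P_{\g_1}$ forming the left map of the bottom row. That inclusion is injective by exactness of the bottom row, so $\rho_{V_1}(g)=1$ yields $g=1$. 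Hence $\ker\rho_\X$ is trivial and $\rho_\X$ is injective.

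Equivalently, (iii) can be read off from part (i) with no computation: a pullback square in the category of groups identifies $P_\g$ with the fiber product $P_{\g_1}\times_{P_X}P_{\g_2}=\{(g_1,g_2)\mid\rho_X(g_1)=\rho_X(g_2)\}$, and under this identification $\rho_\X$ is exactly the canonical inclusion of the fiber product into $P_{\g_1}\times P_{\g_2}$, which is injective by construction. I would keep the diagram chase as the main proof, since it uses only the more elementary exactness of part (ii), and note the pullback deduction as a one-line alternative.

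The step carrying the weight is therefore not (iii) but the establishment of (i) and (ii) on which it rests; given those, the deduction above is immediate. The only genuinely substantive point underlying them is that the square in (i) is a \emph{pullback} rather than merely commutative — equivalently, that the two fibers appearing in (ii) coincide and that $\rho_{V_1}$ restricts to the identity between them. This is exactly where the generalized parallel connection hypothesis $E_{\g_1}\cap E_{\g_2}=\binom{X}{2}$ enters: the generators $a_{ij}$ of $P_\g$ killed by $\rho_{V_2}$ are those with an endpoint in $V_1\setminus X$, and since such an edge cannot lie in $E_{\g_2}$ it must lie in $E_{\g_1}$, so these are precisely the generators of $P_{\g_1}$ killed by $\rho_X$. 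Thus the two copies of $P_{n_1,m}^{\g_1}$ are canonically the same group and the comparison map is the identity, as part (ii) asserts. If one wished to prove everything from scratch rather than citing the modular-flat fibering of \cite{Par00,FaPr02}, verifying this identification of fibers — and hence the pullback property — would be the main obstacle.
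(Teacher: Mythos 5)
Your proposal is correct and takes essentially the same route as the paper: parts (i) and (ii) are obtained, just as in the published proof, by citing the modular-flat fibration of \cite{Par00} and the pullback of modular fibrations for generalized parallel connections from \cite{FaPr02} (with the bottom row of (ii) being the split sequence of Theorem~\ref{thm:Pg semi}), and part (iii) is the same formal consequence --- the paper's one line ``Statement (iii) follows from the definition of pullback diagram'' is exactly your fiber-product remark, of which your diagram chase on (ii) is an equivalent unwinding. Your closing observation, that the substantive content is the identification of the two fibers forced by $E_{\g_1}\cap E_{\g_2}=\binom{X}{2}$ and supplied by the cited fibration results, accurately locates where the paper's proof does its real work.
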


\begin{proof} In \cite{Par00} it is shown that the inclusion $U_{\g_1} \to U_X$, with $X$ a modular flat in $\g_1$, is equivalent to a locally trivial fibration with a section. In \cite{FaPr02}, the notion of generalized parallel connection of arrangements is defined, and shown to correspond to a pullback diagram of modular fibrations. The graphic arrangement of a generalized parallel connection of graphs is the generalized parallel connection of corresponding graphic arrangements, so the pullback diagram applies to the graphic arrangement complements $U_\g$, $U_{\g_i}$, and $U_X$, inducing the pullback diagram in (i). 

The lower row in (ii) is the exact sequence of Proposition~\ref{thm:Pg semi}. The exactness of the upper row, and commutativity of the diagram, follow from the fact that the pullback diagram in (i) is induced by a pullback diagram of locally trivial fibrations. Statement (iii) follows from the definition of pullback diagram.
\end{proof}

In the special case $\g_1=K_n$, $n > m$, Proposition~\ref{prop:slf} appears in 
\cite{CFR10}. 
Without loss, take $X=[m]$. Since $X$ 
is clique in $\g_2$, the set of linear functions $\{x_1,x_2,\dots,x_m\}$ is a generating set for the arrangement $\A_{\g_2}$ in $\C^{n_2}$ in the sense of \cite[Section 2]{CFR10}: all differences $x_i-x_j$, $1\le i<j\le m$, are nowhere zero on the complement $U_{\g_2}$. Consequently, the map $f\colon U_{\g_2} \to \C^m$, $f(x_1,\dots,x_{n_2})=(x_1,\dots,x_m)$, takes values in  the configuration space $U_{K_m}$. As in \cite[Thm.~2.1.3]{CFR10}, one can then check that $U_\g$ is homeomorphic to the total space of the pullback of the configuration space bundle $U_{\g_1}=U_{K_n} \to U_{K_m}$  along the map $f$, yielding an equivalence of bundles between $U_\g \to U_{\g_2}$ and this pullback. The map $f$ is linearly equivalent to $\rho_X$.

The case $\g_1=K_{m+1}$ appears in \cite[Thm.~1.1.5]{C01}, and will be 
particularly useful. 
Since $P_{m+1,m}^{K_{m+1}}= F_m$, we have the following.
\begin{cor}\label{cor:neighbor}
Suppose $\g= K_{m+1} \cup_{[m]} \g'$. Then
\begin{enumerate}
\item there is a commuting diagram of split short exact sequences
\[
\xymatrix{
1 \ar[r] & F_m \ar[r] \ar[d]_{\mathrm{id}} & P_\g \ar[r] \ar[d]^{\rho_{[m+1]}} & P_{\g'}  \ar[r] \ar[d]^{\rho_{[m]}} & 1 \\
1 \ar[r] & F_m \ar[r]  & P_{m+1} \ar[r] & P_{m}  \ar[r] & 1;
}
\]
\item $P_\g$ embeds in $P_{\g'}\times P_{m+1}$;  
\item $P_\g$ embeds in a product of pure braid groups if and only if $P_{\g'}$ does. 
\end{enumerate}

\end{cor}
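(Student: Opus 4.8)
The plan is to prove both implications, leveraging parts (i) and (ii) that are already established. The forward direction is essentially immediate: if $P_\g$ embeds in a product of pure braid groups, then so does its retract $P_{\g'}$. Indeed, the split exact sequence in part (i) exhibits $P_{\g'}$ as a quotient of $P_\g$ via a map that admits a section; concretely, the section $s \colon P_{\g'} \to P_\g$ of $\rho_{V_2}$ (restricting to $P_\g$ the section of the pure braid sequence, as in Theorem \ref{thm:Pg semi}) realizes $P_{\g'}$ as a subgroup of $P_\g$. Composing this inclusion with an embedding $P_\g \hookrightarrow \prod_k P_{n_k}$ yields an embedding of $P_{\g'}$ into the same product of pure braid groups.

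For the reverse direction, I would start from an embedding $\iota \colon P_{\g'} \hookrightarrow \prod_k P_{n_k}$ and combine it with the map $\rho_{[m+1]} \colon P_\g \to P_{m+1}$ appearing in part (i). The natural candidate is the product homomorphism
\[
(\iota \circ \rho_{[m]}') \times \rho_{[m+1]} \colon P_\g \longrightarrow \Bigl(\textstyle\prod_k P_{n_k}\Bigr) \times P_{m+1},
\]
where $\rho_{[m]}' = \rho_{[m]} \circ \rho_{V_2}$ is the composite $P_\g \to P_{\g'} \to P_m$; but rather than track this explicitly it is cleaner to invoke part (ii). By part (ii), the map $\rho_\X = \rho_{V_1} \times \rho_{V_2} \colon P_\g \to P_{m+1} \times P_{\g'}$ is injective. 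Post-composing with $\mathrm{id}_{P_{m+1}} \times \iota$, which is injective since $\iota$ is, gives an injection of $P_\g$ into $P_{m+1} \times \prod_k P_{n_k}$, a product of pure braid groups.

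The only point requiring care is to confirm that part (ii) is genuinely available in the present setting, i.e.\ that $\g = K_{m+1} \cup_{[m]} \g'$ satisfies the hypotheses of Proposition \ref{prop:slf} with $\X = \{V_1, V_2\} = \{[m+1], V(\g')\}$. Here $V_1 = [m+1]$, $V_2 = V(\g')$, and $X = [m]$ is an $m$-clique equal to $V_1 \cap V_2$, so the generalized parallel connection structure is exactly as required, and $P_{V_1} = P_{[m+1]} = P_{m+1}$ while $P_{V_2} = P_{\g'}$; the identification $P_{m+1,m}^{K_{m+1}} = F_m$ noted before the corollary then specializes part (ii) of the proposition to the diagram stated in part (i). I expect no real obstacle here, since the corollary is precisely the case $\g_1 = K_{m+1}$ of the proposition. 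The main subtlety, such as it is, lies in the forward direction: one must be sure the retraction is a genuine splitting and not merely a quotient, which is guaranteed by the splitting asserted in Theorem \ref{thm:Pg semi} and inherited by the sequence in part (i).
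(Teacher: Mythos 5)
Your proposal is correct and takes essentially the same route as the paper, which states the corollary without separate proof as the specialization $\g_1=K_{m+1}$ of Proposition~\ref{prop:slf} (using $P_{m+1,m}^{K_{m+1}}=F_m$), with part (iii) following formally from (i) and (ii) exactly as you argue. Your two directions --- embedding $P_{\g'}$ in $P_\g$ via the splitting in (i), and composing the embedding of (ii) with $\mathrm{id}_{P_{m+1}}\times\iota$ --- are precisely the intended (implicit) argument.
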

\begin{rem}
By (iii) above, if \g\ is an iterated generalized parallel connection of complete graphs, then $P_\g$ embeds in a product of pure braid groups. These are precisely the chordal graphs, and the corresponding graphic arrangements are fiber-type \cite[Prop.~2.8]{Sta72}. With this observation, the embedding of $P_\g$ in a product of pure braid groups follows from \cite{C01}.
\end{rem}

We now identify some retractive subsets of $Y=\{a_{ij} \mid \{i,j\} \in E_\g\}$. 
To simplify statements, we will say a subset $X$ of $V$ is retractive if the set $\{{a}_{ij} \mid i,j \in X\}$ is a retractive subset of $Y$. 

\begin{prop} \label{modular} Let $X \subseteq V$ be a clique. Then $X$ is retractive.
\end{prop}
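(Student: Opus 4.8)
The plan is to show that $X$ being a clique allows us to invoke Corollary~\ref{cor:semidirect}, which identifies retractive subsets arising from semidirect product decompositions. The key observation is that when $X$ is a clique, Theorem~\ref{thm:Pg semi} provides a splitting $P_\g \cong P_{n,m}^\g \rtimes P_X$ (after relabeling so that $X = [m]$), where the complementary factor $P_X \cong P_m$ is generated precisely by the $a_{ij}$ with $i,j \in X$. The retractive set we wish to analyze is exactly $S = \{a_{ij} \mid i,j \in X\}$, the generating set of the $P_X$ factor.

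First I would reduce to the case $X = [m]$ by relabeling vertices; since the presentation relations~\eqref{eq:PureBraidRels} and the definition of $P_\g$ are equivariant under such relabeling, this is harmless. Next I would invoke Theorem~\ref{thm:Pg semi} to obtain the split extension
\[
1 \to P_{n,m}^\g \to P_\g \to P_m \to 1,
\]
where $P_{n,m}^\g = P_{n,m}/N$. Writing $G_1 = P_m$ and $G_2 = P_{n,m}^\g$, this exhibits $P_\g$ as $G_2 \rtimes G_1$. I would then take the generating set $Y = Y_1 \cup Y_2$ where $Y_1 = \{a_{ij} \mid i,j \in X\}$ generates the quotient/complement $P_m \cong \langle Y_1 \rangle$, and $Y_2 = \{a_{ij} \mid j > m,\ \{i,j\} \in E_\g\}$ generates the normal subgroup $P_{n,m}^\g$. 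With this decomposition in place, Corollary~\ref{cor:semidirect} immediately yields that $Y_1$ is retractive, which by the convention stated just before the proposition is exactly the assertion that $X$ is retractive.

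The main subtlety to verify carefully is that $Y_1$ really does generate a complement mapping isomorphically onto $G_1 = P_m$ under the splitting, i.e.\ that the standard section of~\eqref{eq:Pnm} sends the Artin generators $a_{ij}$ of $P_m$ to the elements $a_{ij} \in P_\g$ with $i,j \in [m]$. This follows because the section in the Fadell–Neuwirth sequence is the one induced by including the first $m$ strands, under which the generators $a_{ij}$ ($i,j \le m$) lift to the corresponding generators in $P_n$, and these survive the quotient by $N$ since $\{i,j\} \in \binom{X}{2} \subseteq E_\g$ lies in the edge set (as $X$ is a clique). Thus $\langle Y_1 \rangle$ is precisely the image of this section and maps isomorphically to $P_m$, so the hypotheses of Corollary~\ref{cor:semidirect} are met with $Y_1 \subseteq G_1$ and $Y_2 \subseteq G_2$.

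I expect the proof to be short once the semidirect product structure is correctly aligned with the generating set, so the only real obstacle is bookkeeping: confirming that the clique hypothesis guarantees none of the generators $a_{ij}$ with $i,j \in X$ is killed in passing to $P_\g$, and that the relabeling $X = [m]$ does not disturb the inequality conventions ($i < j$) built into the relations~\eqref{eq:PureBraidRels}. Both are immediate from the definition of clique, so no serious difficulty arises.
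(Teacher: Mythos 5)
Your proof is correct and follows essentially the same route as the paper, whose own proof of Proposition~\ref{modular} is precisely the combination of Theorem~\ref{thm:Pg semi} and Corollary~\ref{cor:semidirect}, with your careful check that the section of the split extension carries the Artin generators of $P_m$ to the elements $a_{ij}$, $i,j \in X$, supplying details the paper leaves implicit. (The paper also records an alternative argument verifying the hypothesis of Proposition~\ref{conjfree} directly from the Artin presentation, but only as a second route.)
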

\begin{proof} 
This follows from Theorem \ref{thm:Pg semi} and Corollary \ref{cor:semidirect}.

Alternatively, one can use the Artin presentation to verify the hypothesis of Proposition~\ref{conjfree} directly as follows. Let $K=\{i,j,r,s\} \in \binom{V}{4}$. Suppose $ij|rs$ is non-crossing. If $K \subseteq X$ then $\phi_X([a_{ij},a_{rs}])=[a_{ij},a_{rs}]$. If $K \not \subseteq X$, then $\phi_X([a_{ij},a_{rs}])=1$ 
in $F(X)$, 
 i.e., the relation $[a_{ij},a_{rs}]=1$ in $P_n$ becomes $1=1$ in $P_X$.  
Suppose $ij|rs$ is crossing. 
If $K \subseteq X$ then, because $X$ is a clique, $\phi_X([a_{ij}, a_{sj}a_{rs}a_{sj}^{-1}])=[a_{ij},a_{sj}a_{rs}a_{sj}^{-1}]$. Suppose $K \not \subseteq X$. Then either $\{i,j\}$ or $\{r,s\}$ does not lie in $\binom{X}{2}$, hence $\phi_X([a_{ij}, a_{sj}a_{rs}a_{sj}^{-1}])=1$. 

Finally, suppose $K=\{i,j,r\} \in \binom{V}{3}$. If $K \in \binom{X}{3}$ then the relations \eqref{commrelns}\,(iii) are preserved by $\phi_X$, and if $\{i,j,r\} \not \in \binom{X}{3}$, then at least two of $\{i,j\}$ $\{i,r\}$, and $\{j,r\}$ lie in $\bar{E}_\g$, hence those relations reduce to $1=1$ under $\phi_X$. Then Proposition~\ref{conjfree} implies $X$ is retractive.
\end{proof}

\begin{prop}\label{cliqfam} If $\X$ is a family of cliques in $\g$, then $\X$ is retractive.
\end{prop}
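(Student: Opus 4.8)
The goal is to prove that a family $\X$ of cliques in $\g$ is a retractive family. Recalling the definition, a retractive family is a collection of pairwise incomparable nonempty sets such that every nonempty intersection of members of $\X$, and every relevant singleton, is retractive. So the plan is to verify each ingredient of this definition.

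First I would address the singletons. By the remark following the definition of retractive family, if the abelianization $P_\g/[P_\g,P_\g]$ is free abelian of rank $|Y|$, then every singleton $\{a_{ij}\}$ is retractive. This holds in our setting: the abelianization of $P_\g$ is free abelian on the classes of the generators $a_{ij}$, $\{i,j\} \in E_\g$, since the relations \eqref{commrelns} are all commutator relations and hence trivial in the abelianization. So the singleton condition is immediate.

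The main content is verifying that every nonempty intersection of members of $\X$ is retractive. Here the key observation is that the intersection of two cliques is again a clique: if $X_1, X_2 \subseteq V$ are cliques, then any two vertices in $X_1 \cap X_2$ are joined by an edge (being both in $X_1$, say), so $X_1 \cap X_2$ is a clique as well. This extends to any finite intersection $\bigcap_{k} X_k$ of cliques in $\X$, which is therefore a clique (when nonempty). I would then simply invoke Proposition~\ref{modular}, which states that every clique is retractive. Thus every nonempty intersection of members of $\X$ is a retractive set. Combined with the singleton observation, this establishes that $\X$ is a retractive family, provided $\X$ consists of pairwise incomparable nonempty sets.

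I expect no serious obstacle here; the proposition is essentially a bookkeeping consequence of two facts already in place, namely that intersections of cliques are cliques and that cliques are retractive (Proposition~\ref{modular}). The only point requiring a word of care is the pairwise-incomparability hypothesis built into the definition of a retractive family: one may need either to assume $\X$ is an antichain of cliques, or to pass to the maximal elements, since a family that is merely a set of cliques need not be pairwise incomparable. Assuming $\X$ is given as a family of pairwise incomparable cliques (as is the case for the families $\X(\g)$ used in the paper), the argument is complete. The proof is therefore short: verify the singleton retractivity via the abelianization, note that the relevant intersections are cliques, and apply Proposition~\ref{modular}.
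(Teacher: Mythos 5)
Your proof is correct and follows essentially the same route as the paper: the paper's one-line argument also reduces everything to Proposition~\ref{modular} via the facts that intersections of cliques are cliques and that edges (hence singletons $\{a_{ij}\}$) are themselves 2-cliques. The only cosmetic difference is that you handle singletons through the abelianization remark ($P_\g^{\mathrm{ab}}$ is free abelian of rank $|Y|$ since all relations are commutator words) rather than by viewing each edge as a clique, and your caveat about pairwise incomparability is a reasonable reading of a hypothesis the paper leaves implicit.
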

\begin{proof} Since each edge $\{i,j\} \in E_\g$ is a clique, and the intersection of any family of cliques of \g\ is a clique of \g, the statement follows from Proposition~\ref{modular}.
\end{proof}

Recall the pure braid group indexed by strands in $X \subseteq V$ is denoted by $P_X$. A family $\X$ of cliques in $\g$ gives rise to the product homomorphism
\[
 \rho_\X=\textstyle{\underset{X \in \X}{\prod}} \, \rho_X \colon P_\g \longrightarrow \underset{X \in \X}{\displaystyle{\prod}} P_{X}
\]
from the graphic pure braid group to a direct product of pure braid groups.

\begin{ex} \label{ex:brunnian}
If $\X$ is any family of proper cliques in the complete graph $\g=K_n$, the kernel of $\rho_\X$ contains the (nontrivial) group of $n$-strand Brunnian braids, hence $\rho_\X$ is not injective.
\end{ex}

In light of the preceding example, we will let $\X(\g)$ denote the set of \emph{maximal} cliques in the simple graph $\g$.

\begin{ex} \label{ex:deletedK5}
Let \g\ be the 
graph on $V=[5]$ with $E_\g=\binom{V}{2} \setminus \left\{\{4,5\}\right\}$. 
Note that \g\ is chordal. 
Then $\X(\g)$ is the family $\left\{\{1,2,3,4\},\{1,2,3,5\}\right\}$ of two maximal cliques in $\g$. Deletion of the fifth vertex defines the split surjection $P_\g \to P_{1234}$ of Theorem~\ref{thm:Pg semi}, which is the pullback of the deletion map $P_{1234} \to P_{123}$ along the deletion map $P_{1235} \to P_{123}$ - see Proposition~\ref{prop:slf}. 
\[
\xymatrix{
P_\g \ar[r] \ar[d] & P_{1234}\ar[d]\\
P_{1235} \ar[r] &  P_{123}.
}
\]
Since this is a pullback diagram,  
the map $\rho_{\X(\g)}=(\rho_{1234},\rho_{1235}) \colon P_\g \to P_{1234} \times P_{1235}$ is injective. 
\end{ex}

\begin{rem} \label{rem:limit}
In general, the group $P_\g$ may be realized as the limit of a diagram of groups on the nerve of the family $\X(\g)$, with vertex groups $P_X$ and split surjections $P_X \to P_{X'}$ for $X \supseteq X'$. 
See \cite{L-FS} for analogous observations concerning the holonomy Lie algebra of the graphic arrangement $\A_\g$.
\end{rem}

Based on Proposition \ref{prop:slf}, Corollary \ref{cor:neighbor}, and Example \ref{ex:deletedK5}, one might speculate that the map $\rho_{\X(\g)}$ is injective for all graphs \g.  In the next section, we find conditions on the graph \g\ which ensure that $\rho_{\X(\g)}$ is injective

\section{\texorpdfstring{$K_4$}{K4}-free graphs}
\label{triangle}
With the results of the preceding section we are able to apply Theorem~\ref{inj} to $P_\g$ under some conditions. 
There is no loss in assuming \g\ has no isolated vertices, so $|X| \geq 2$ for $X \in \X(\g)$. By Proposition~\ref{cliqfam} we have the following.

\begin{prop} The family $\X(\g)$ is retractive.
\end{prop}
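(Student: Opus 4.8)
The plan is to recognize this statement as an immediate consequence of Proposition~\ref{cliqfam}, since by definition $\X(\g)$ is the set of maximal cliques of \g, and hence in particular a family of cliques. The only points requiring attention beyond the bare citation are the structural conditions built into the definition of a retractive family that Proposition~\ref{cliqfam} does not explicitly address: namely that the members of $\X(\g)$ are pairwise incomparable and nonempty, and that each singleton $\{a_{ij}\}$ is retractive.

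First I would observe that the elements of $\X(\g)$ are pairwise incomparable precisely because they are maximal cliques: no maximal clique is contained in another. Under the standing assumption that \g\ has no isolated vertices, every vertex lies in some edge, so each maximal clique is a $k$-clique with $k \geq 2$ and in particular nonempty. This disposes of the incomparability and nonemptiness requirements. For the condition on intersections, I would note that the intersection of any subfamily of cliques is again a clique of \g, and every clique is retractive by Proposition~\ref{modular}; this is exactly the content already encapsulated in Proposition~\ref{cliqfam}.

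For the singleton condition I would invoke the remark immediately following the definition of retractive family: since $P_\g \cong \pi_1(U_\g)$ is the fundamental group of the complement of a complex hyperplane arrangement, its abelianization is free abelian of rank equal to the number of hyperplanes of $\A_\g$, namely $|E_\g| = |Y|$. Hence each singleton $\{a_{ij}\}$ with $\{i,j\} \in E_\g$ is retractive, completing the verification.

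I do not expect a substantive obstacle here; the statement reduces entirely to assembling definitional pieces already established. The one point meriting a moment's care is confirming that the pairwise-incomparability clause in the definition of a retractive family is genuinely satisfied by $\X(\g)$ — which it is, by maximality — rather than taken for granted, since Proposition~\ref{cliqfam} is phrased for arbitrary families of cliques.
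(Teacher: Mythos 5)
Your proposal is correct and matches the paper, which proves this proposition simply by citing Proposition~\ref{cliqfam} under the standing assumption that $\g$ has no isolated vertices (so $|X|\geq 2$ for $X\in\X(\g)$, giving nonemptiness, and maximality gives pairwise incomparability of the generator sets). The only cosmetic difference is that you verify singleton retractivity via the abelianization remark (which the paper also endorses with ``this will be the case in our setting''), whereas the proof of Proposition~\ref{cliqfam} gets it from the observation that each edge is itself a $2$-clique, so Proposition~\ref{modular} applies; both routes are valid.
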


Thus the hypotheses of Theorem~\ref{inj} hold in our setting. It remains to verify conditions (i) and (ii) of that theorem. To simplify terminology, we will say a set of elements of $Y$ is transverse to $\X(\g)$ when it is transverse to the corresponding family in $2^Y$. The following result is originally due to D.~Malcolm \cite{Mal15}.

\begin{prop} \label{comm} Suppose $S=\{a_{ij},a_{rs}\} \subseteq Y$ is transverse to $\X(\g)$. Then 
\[
[a_{ij},a_{rs}]=1
\]
in $P_\g$.
\end{prop}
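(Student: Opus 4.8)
The plan is to reduce the claim to the defining relations of $P_n$ listed in \eqref{commrelns}, exploiting the fact that transversality forces a specific generator to become trivial in the quotient $P_\g$. Write $\{i,j\},\{r,s\} \in E_\g$ with $i<j$ and $r<s$. Since $\{a_{ij},a_{rs}\}$ is transverse to $\X(\g)$, no maximal clique contains both edges, so the vertex set $\{i,j\}\cup\{r,s\}$ is not a clique of \g. First I would split into two cases according to whether the two edges are disjoint (four distinct vertices) or share a vertex (three distinct vertices).

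Suppose first the edges are disjoint. If $ij|rs$ is non-crossing, then $[a_{ij},a_{rs}]=1$ already holds in $P_n$ by relation \eqref{commrelns}\,(i), hence in the quotient $P_\g$, and transversality is not even needed. If $ij|rs$ is crossing, arrange, interchanging the two edges if necessary, that $r<i<s<j$. The four pairs $\{r,i\}$, $\{i,s\}$, $\{r,j\}$, $\{s,j\}$ are precisely the pairs appearing as conjugators in the four equivalent forms of the crossing relation \eqref{commrelns}\,(ii) recorded in Remark~\ref{rem:equiv pure braid rels}. Because $\{i,j,r,s\}$ is not a clique while $\{i,j\}$ and $\{r,s\}$ are edges, at least one of those four pairs is a non-edge, so the corresponding generator is trivial in $P_\g$. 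I would then select the form of \eqref{commrelns}\,(ii) whose conjugator is exactly that generator: e.g.\ if $\{s,j\} \in \bar{E}_\g$ then $[a_{ij}, a_{sj}a_{rs}a_{sj}^{-1}]=1$ collapses to $[a_{ij},a_{rs}]=1$ once $a_{sj}=1$, and symmetrically for the other three pairs via the three displayed variants in Remark~\ref{rem:equiv pure braid rels}, yielding the claim.

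Now suppose the edges share a vertex, so there are three vertices, which I relabel $i<j<r$. Transversality means $\{i,j,r\}$ is not a clique, so the single pair among $\{i,j\},\{i,r\},\{j,r\}$ that is not one of our two given edges is a non-edge, and its generator vanishes in $P_\g$. The relations \eqref{commrelns}\,(iii), namely $[a_{ij}a_{ir},a_{jr}]=[a_{ij},a_{ir}a_{jr}]=1$, are arranged so that each choice of vanishing generator degenerates one of these commutators to the commutator of the two surviving generators: $a_{ir}=1$ turns $[a_{ij}a_{ir},a_{jr}]=1$ into $[a_{ij},a_{jr}]=1$; $a_{jr}=1$ turns $[a_{ij},a_{ir}a_{jr}]=1$ into $[a_{ij},a_{ir}]=1$; and $a_{ij}=1$ turns $[a_{ij}a_{ir},a_{jr}]=1$ into $[a_{ir},a_{jr}]=1$. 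This disposes of all three subcases.

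The routine part is the case analysis; the one genuinely substantive point is the crossing subcase, where the argument hinges on the observation that the four equivalent presentations of relation \eqref{commrelns}\,(ii) in Remark~\ref{rem:equiv pure braid rels} have their conjugators running over exactly the four non-edge candidates. The main thing to get right is the bookkeeping matching each possible non-edge to the presentation of the relation in which it occurs as the conjugator, so that some form of the relation always degenerates to the bare commutator $[a_{ij},a_{rs}]=1$. I note that this argument uses only transversality and not $K_4$-freeness, so the statement in fact holds for arbitrary \g.
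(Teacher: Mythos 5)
Your proof is correct and is essentially the paper's own argument: the paper likewise splits on $|\{i,j,r,s\}|=3$ or $4$, quotes \eqref{commrelns}\,(i) for non-crossing pairs, kills one of the four conjugator generators and invokes \eqref{commrelns}\,(ii) or its variants from Remark~\ref{rem:equiv pure braid rels} in the crossing case, and degenerates \eqref{commrelns}\,(iii) in the shared-vertex case. You merely make explicit the case-matching the paper leaves to the reader, and your closing remark that only transversality (not $K_4$-freeness) is needed agrees with the paper, whose statement indeed assumes nothing about $4$-cliques.
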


\begin{proof} First suppose $|\{i,j,r,s\}|=3$. Since $S$ is transverse to $Y$, $S$ is not a subset of any 3-clique of \g, hence one of the three elements of $\binom{S}{2}$ lies in $\bar{E}_\g$ and the corresponding pure braid generator maps to $1$ in $P_\g$. Then the pure braid relation \eqref{commrelns}\,(iii) implies that the other two generators commute in $P_\g$, that is, $[a_{ij},a_{rs}]=1$ in $P_\g$. We leave the case-by-case verification to the reader.

Suppose $\{i,j,r,s\}=4$. If $ij|rs$ is non-crossing, then $[a_{ij},a_{rs}]=1$ in $P_n$, hence in $P_\g$, by \eqref{commrelns}\,(i). Suppose $ij|rs$ is crossing; without loss of generality, $r<i<s<j$. Since $S$ is not a 4-clique of \g, one of the four elements of 
\(
\binom{S}{2} \setminus \{\{i,j\},\{r,s\}\}
\) 
lies in $\bar{E}_\g$, so the corresponding pure braid generator maps to 1 in $P_\g$. Then, using \eqref{commrelns}\,(ii) or one of the consequent relations listed in Remark~\ref{rem:equiv pure braid rels}, we conclude $[a_{ij},a_{rs}]=1$ in $P_\g$.
\end{proof}

We will verify condition (ii) of Theorem \ref{inj} using the following.

\begin{lemma} Let $X$ be a 3-clique in a graph \g, and let $g \in P_\g$. Suppose  $[g,a_{ij}]=1$ for two of the three elements $\{i,j\} \in \binom{S}{2}$. Then $[[P_X,P_X],g]=1$.
\label{twoof}
\end{lemma}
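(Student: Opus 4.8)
The plan is to reduce $[P_X,P_X]$ into a subgroup that $g$ manifestly centralizes. Write $X=\{i,j,k\}$ with $i<j<k$, so that the clique $X$, being retractive by Proposition~\ref{modular}, gives a subgroup $P_X\le P_\g$ isomorphic to $P_3$ and generated by $a_{ij},a_{ik},a_{jk}$. By hypothesis $g$ commutes with two of these three generators; relabeling if necessary (the three edges of $X$ will play symmetric roles below), I assume $[g,a_{ij}]=[g,a_{ik}]=1$ and set $C=\langle a_{ij},a_{ik}\rangle\le P_X$. Since $g$ commutes with both generators of $C$, it commutes with every element of $C$, so the whole lemma is reduced to proving the single containment $[P_X,P_X]\subseteq C$.

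To establish that containment I would invoke the fact recalled in Section~\ref{purebraid} that the center $Z(P_X)\cong\Z$ of $P_X\cong P_3$ is generated by the full twist $\Delta^2=a_{ij}a_{ik}a_{jk}$. This gives $a_{jk}=a_{ik}^{-1}a_{ij}^{-1}\Delta^2\in C\cdot Z(P_X)$, whence $P_X=C\cdot Z(P_X)$. Then for arbitrary $u,v\in P_X$ I would write $u=cz$ and $v=c'z'$ with $c,c'\in C$ and $z,z'\in Z(P_X)$; since the central factors drop out of the commutator, $[u,v]=[c,c']\in[C,C]\subseteq C$. As $[P_X,P_X]$ is generated by such commutators, this yields $[P_X,P_X]\subseteq C$, and the lemma follows at once from the fact that $g$ centralizes $C$.

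The argument is essentially bookkeeping once the centrality of $\Delta^2$ is in hand, so I do not anticipate a serious obstacle; the only content beyond standard facts about $P_3$ is the reduction $[P_X,P_X]\subseteq C$. The one point requiring care is the symmetry claim, namely that the reduction works for whichever two of the three generators $g$ happens to centralize. This is immediate from the centrality of $\Delta^2$: each of $a_{ij},a_{ik},a_{jk}$ equals a word in the other two times a central element, so every choice of two generators generates $P_X$ modulo its center, and the containment $[P_X,P_X]\subseteq\langle\text{chosen pair}\rangle$ then holds verbatim in each of the three cases.
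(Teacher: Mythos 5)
Your proof is correct and takes essentially the same approach as the paper: there, too, the centrality of the full twist $\Delta^2=abc$ is used to decompose $P_X\cong\langle \Delta^2\rangle\times F$, with $F$ generated by whichever two of the three generators $g$ centralizes, so that $[P_X,P_X]=[F,F]$ is centralized by $g$. Your variant, factoring $P_X=C\cdot Z(P_X)$ and letting central factors drop out of commutators, is the same idea (and is in fact exactly the argument the paper uses for the 4-clique analogue, Lemma~\ref{fiveof}).
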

\begin{proof} By \eqref{commrelns}\,(iii), $P_X$ has presentation $\langle a,b,c \mid abc=bca=cab \rangle$. Let $x=abc$. Then $P_X = \langle x,y,z \mid [x,y]=[x,z]=1\rangle$, with $y$ and $z$ equal to any two of $a,b,c$. Setting $F=\langle y,z \rangle$, one has $P_X \cong \langle x \rangle \times F$, so $[P_X,P_X]=[F,F]$. Since $y$ and $z$ may be chosen so that $g$ centralizes $F$, it follows that  $[[P_X,P_X],g]=1$.
\end{proof}

\begin{prop}\label{triple} Suppose \g\ is $K_4$-free.  Let $X \in \X(\g)$ and $\{r,s\} \not \subseteq X$. Then 
\[
[[P_X,P_X],a_{rs}]=1
\]
in $P_\g$.
\end{prop}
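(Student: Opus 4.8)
The plan is to reduce to Lemma~\ref{twoof} by showing that $a_{rs}$ commutes with at least two of the three edge generators of $X$, and to extract those commutations from Proposition~\ref{comm}. First I would dispose of the case $|X|=2$: since \g\ has no isolated vertices and is $K_4$-free, every $X \in \X(\g)$ has $|X| \in \{2,3\}$, and when $|X|=2$ the group $P_X \cong \Z$ is abelian, so $[P_X,P_X]=1$ and the claim is immediate.

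So I would assume $X$ is a $3$-clique. By Lemma~\ref{twoof} it suffices to prove that $a_{rs}$ commutes with two of the three generators $a_{ab}$ indexed by edges $\{a,b\}$ of $X$. For such an edge, Proposition~\ref{comm} yields $[a_{ab},a_{rs}]=1$ as soon as $\{a_{ab},a_{rs}\}$ is transverse to $\X(\g)$, i.e.\ $\{a,b,r,s\}$ lies in no maximal clique; as \g\ is $K_4$-free, this holds precisely when $\{a,b,r,s\}$ is not itself a clique. The task is then to count how many of the three edges of $X$ enjoy this property.

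If $\{r,s\}\cap X=\emptyset$, then for each of the three edges $\{a,b\}$ the set $\{a,b,r,s\}$ has four distinct vertices, hence is not a clique; all three edges are transverse, and $a_{rs}$ in fact centralizes $P_X$. If instead exactly one endpoint of $\{r,s\}$, say $r$, lies in $X$, write $X=\{r,q,t\}$. The edge $\{q,t\}$ avoids $r$, so $\{q,t,r,s\}$ has four distinct vertices and is transverse. For the remaining edges $\{r,q\}$ and $\{r,t\}$, the set $\{r,q,s\}$ (resp.\ $\{r,t,s\}$) is a clique exactly when $\{q,s\}$ (resp.\ $\{t,s\}$) is an edge; were both present, $\{r,q,t,s\}$ would be a $K_4$, contrary to hypothesis. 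Hence at least one of $\{r,q\},\{r,t\}$ is transverse as well, producing a second commuting generator.

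In either case $a_{rs}$ centralizes at least two of the three edge generators of $X$, whence Lemma~\ref{twoof} gives $[[P_X,P_X],a_{rs}]=1$. The step I expect to require the most care is the sub-case in which a single endpoint of $\{r,s\}$ lies in $X$: there the four-vertex transversality argument settles only one edge, and the second commutation rests on invoking $K_4$-freeness to forbid the clique $\{r,q,t,s\}$. I would also verify carefully the equivalence used there, namely that $\{r,q\}$ fails to be transverse exactly when $\{q,s\}\in E_\g$, noting that any clique containing $\{r,q,s\}$ must contain the edge $\{q,s\}$.
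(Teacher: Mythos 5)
Your proof is correct and takes essentially the same route as the paper: the paper likewise reduces to Lemma~\ref{twoof} via Proposition~\ref{comm}, compressing your case analysis into the single observation that $X \cup \{r,s\}$ has cardinality at least $4$, so $K_4$-freeness prevents it from being a clique and forces at least two of the pairs $\{a_{ab},a_{rs}\}$ to be transverse to $\X(\g)$. Your explicit treatment of the sub-case where one endpoint of $\{r,s\}$ lies in $X$ is exactly the content the paper leaves implicit.
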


\begin{proof} By hypothesis, the set $\X(\g)$ of maximal cliques in \g\ consists of 2- and 3-cliques. We may assume $X$ is a 3-clique in \g. By Lemma~\ref{twoof} and Proposition~\ref{comm}, $[[P_X,P_X],a_{rs}]=1$ in $P_\g$ if $a_{rs}$ together with any two of the three generators of $P_X$ forms a set transverse to $\X(\g)$. This must be the case, since $X \cup \{r,s\}$ has cardinality at least 4.
\end{proof}

\begin{thm}\label{k4free} Suppose \g\ is $K_4$-free. Then
\[
\rho_{\X(\g)} \colon P_\g \to \underset{X \in \X(\g)}{\displaystyle{\prod}} P_X
\]
is injective.
\end{thm}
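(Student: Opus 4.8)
The plan is to apply Theorem~\ref{inj} essentially verbatim, with $G=P_\g$, generating set $Y=\{a_{ij}\mid\{i,j\}\in E_\g\}$, and $\X=\X(\g)$ the family of maximal cliques, identified with the family $\{\{a_{ij}\mid i,j\in X\}\mid X\in\X(\g)\}$ of subsets of $Y$ via the convention fixed above. First I would check the two standing hypotheses of Theorem~\ref{inj}. The family $\X(\g)$ is a retractive family by Proposition~\ref{cliqfam}, since its members are cliques. And because we assume \g\ has no isolated vertices, every edge $\{i,j\}\in E_\g$ is a $2$-clique contained in some maximal clique, so each generator $a_{ij}$ lies in the corresponding subset of $Y$; hence $Y=\bigcup\X(\g)$. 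With these in place, the theorem reduces injectivity of $\rho_{\X(\g)}$ to verifying its two numbered commutation conditions.

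The two conditions have already been arranged as the immediately preceding results. Condition (i) requires $[a,b]=1$ for every transverse pair $\{a,b\}\subseteq Y$, which is exactly Proposition~\ref{comm}. Condition (ii) requires $[[G_S,G_S],y]=1$ for each $S\in\X(\g)$ and each $y\in Y\setminus S$; unwinding the identifications, $G_S$ is the pure braid group $P_X$ of the clique $X$ corresponding to $S$, and $y=a_{rs}$ ranges over the generators with $\{r,s\}\not\subseteq X$. This is precisely the statement of Proposition~\ref{triple}. Thus both hypotheses of Theorem~\ref{inj} hold, and the theorem yields that $\rho_{\X(\g)}$ is injective.

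The genuine content lives in the two propositions being invoked rather than in this final assembly, and the $K_4$-free hypothesis enters only through condition (ii). The mechanism there is that $K_4$-freeness forces every maximal clique to be a $2$- or $3$-clique: for a $3$-clique $X$ and a generator $a_{rs}$ with $\{r,s\}\not\subseteq X$, the set $X\cup\{r,s\}$ has at least four vertices, so $a_{rs}$ together with any two of the three generators of $P_X$ is transverse to $\X(\g)$ and hence commutes with $a_{rs}$ by Proposition~\ref{comm}; Lemma~\ref{twoof} then promotes commutation with two of the three generators to commutation with all of $[P_X,P_X]$. The only places a direct attempt could stumble are bookkeeping ones — confirming $Y=\bigcup\X(\g)$ and matching the abstract form of condition (ii) to the concrete statement of Proposition~\ref{triple} — both of which are routine once the restriction of maximal cliques to sizes $2$ and $3$ is used.
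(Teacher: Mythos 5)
Your proposal is correct and follows the paper's own proof essentially verbatim: the paper likewise deduces Theorem~\ref{k4free} directly from Theorem~\ref{inj}, with retractiveness supplied by Propositions~\ref{modular}/\ref{cliqfam}, condition (i) by Proposition~\ref{comm}, and condition (ii) by Proposition~\ref{triple} (which in turn rests on Lemma~\ref{twoof}, exactly as you describe). Your bookkeeping checks, including $Y=\bigcup\X(\g)$ via the no-isolated-vertices convention, match the paper's setup at the start of Section~\ref{triangle}.
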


\begin{proof} This is now a consequence of Theorem~\ref{inj}, using Propositions~\ref{modular}, \ref{comm}, and \ref{triple}.
\end{proof}

\begin{cor}\label{props} Suppose \g\ is a $K_4$-free graph. 
Then $P_\g$ is residually free, torsion-free, residually torsion-free nilpotent, linear, and residually finite. Moreover $P_\g$ acts freely and properly on a CAT(0) cube complex.
\end{cor}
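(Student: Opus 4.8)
The plan is to derive everything from Theorem~\ref{k4free}, which exhibits an embedding $P_\g \hookrightarrow W$, where $W = \prod_{X \in \X(\g)} P_X$. The first step is to recognize $W$ concretely as a product of free groups. Since \g\ is $K_4$-free, every maximal clique has at most three vertices, so each factor $P_X$ is either $\Z$ (when $X$ is a $2$-clique) or $P_3$ (when $X$ is a $3$-clique); and by Lemma~\ref{twoof} the latter splits as $P_3 \cong \Z \times F$ with $F$ free of rank $2$. Hence $W$ is a finite direct product of free groups. Because each of the asserted properties is inherited by subgroups, it then suffices to establish the properties for $W$ and restrict along $P_\g \hookrightarrow W$. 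I would make the subgroup-inheritance explicit once at the start: residual freeness, residual torsion-free nilpotence, and residual finiteness pass to subgroups by restricting the detecting homomorphisms; torsion-freeness and linearity pass to subgroups trivially.

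The second step treats the five algebraic properties for $W \cong \prod_i F_{n_i}$. For the three residual properties I would use the coordinate projections $\pi_i \colon W \to F_{n_i}$: given a nontrivial $w = (w_i) \in W$, choose $i$ with $w_i \neq 1$, and compose $\pi_i$ with a homomorphism from the free group $F_{n_i}$ to a nontrivial free group (respectively a torsion-free nilpotent, respectively a finite group) not killing $w_i$; such a homomorphism exists since free groups are residually free, residually torsion-free nilpotent, and residually finite. Torsion-freeness is immediate, as free groups are torsion-free and both finite products and subgroups of torsion-free groups are torsion-free. Linearity follows by placing faithful representations $F_{n_i} \hookrightarrow SL_2(\Z)$ block-diagonally to obtain a faithful linear representation of $W$, and then restricting. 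These are all routine, given the product-of-free-groups identification.

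The third step is the CAT(0) cube complex action, which is the only point requiring input beyond formal closure properties. I would observe that $W$ is a right-angled Artin group: each free factor $F_{n_i}$ is the right-angled Artin group on $n_i$ vertices with no edges, and a direct product of right-angled Artin groups is again one, with defining graph the join of the factor graphs. It is a standard fact that a right-angled Artin group acts freely, properly, and cocompactly on a CAT(0) cube complex, namely the universal cover of its Salvetti complex. Restricting this action to the subgroup $P_\g$ yields an action that remains free and proper, proving the final claim (and, consistent with the Introduction, cocompactness is what may be lost in passing to the subgroup). I do not anticipate a genuine obstacle here: Theorem~\ref{k4free} does the substantive work, and the remaining content is the identification of the target as a product of free groups together with the cited right-angled Artin group action and the inheritance of all these properties by subgroups and finite products.
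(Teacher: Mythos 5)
Your proposal is correct and takes essentially the same route as the paper: embed $P_\g$ via Theorem~\ref{k4free} into $\prod_{X \in \X(\g)} P_X$, identify each factor as $\Z$ or $F_2 \times \Z$ so that the target is a product of free groups, note the listed properties are preserved by direct products and inherited by subgroups, and invoke the right-angled Artin group structure of the product for the free, proper action on a CAT(0) cube complex. The paper's proof is simply a terser statement of this same argument (with the splitting $P_3 \cong F_2 \times \Z$ asserted directly rather than via Lemma~\ref{twoof}), so your additional detail on projections, $SL_2(\Z)$ representations, and the Salvetti complex is an expansion rather than a departure.
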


\begin{proof} For each $X \in \X(\g)$, $P_X$ is isomorphic to $F_2 \times \Z$ if $|X|=3$, or to \Z\ if $|X|=2$. Then the properties in the first list hold for each $P_X$, and are preserved under direct products and inherited by subgroups, hence hold for $P_\g$ by Theorem~\ref{k4free}. 
The product $\underset{X \in \X(\g)}{\prod} P_X$ is a right-angled Artin group, so acts freely and properly on a CAT(0) cube complex, hence $P_\g$ does as well by Theorem~\ref{k4free}.
\end{proof}

The arrangement $\A_\g$ associated to a $K_4$-free graph \g\ is a decomposable arrangement, so by \cite[Prop.~4.5.6]{CFR10}, the residual nilpotence of $P_\g$ implies that every braid monodromy presentation of $P_\g$ is ``conjugation-free." 
See \cite{CFR10} and the references therein for details.

The existence of the geometric action of Corollary~\ref{props} implies $P_\g$ is a-T-menable, which has several strong consequences \cite{CCJJV01}.  
This action is not cocompact in general - see Corollary~\ref{kpi1}. 

The results above have a slight generalization, based on the analogue of Lemma~\ref{twoof} for 4-cliques.

\begin{lemma}\label{fiveof} Let $X$ be a 4-clique of \g\ and $g \in P_\g$. Suppose $[g,a_{ij}]=1$ for five of the six elements $\{i,j\}$ of $\binom{X}{2}$. Then $[[P_X,P_X],g]=1$ in $P_\g$.
\end{lemma}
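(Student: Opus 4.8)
The plan is to mimic the proof of Lemma~\ref{twoof}, replacing the splitting $P_3 \cong \Z \times F_2$ used there by an analogous description of $P_4$ relative to its rank-one center. Without loss of generality take $X=[4]=\{1,2,3,4\}$, so that $P_X \cong P_4$, and let $\{k,l\}$ be the element of $\binom{X}{2}$ with which $g$ is not assumed to commute; thus $g$ commutes with $a_{ij}$ for the five remaining pairs $\{i,j\}$. Let $A=\langle a_{ij} \mid \{i,j\} \in \binom{X}{2},\ \{i,j\}\neq\{k,l\}\rangle \leq P_X$ be the subgroup generated by those five generators. Since $g$ centralizes each generator of $A$, it centralizes $A$, hence also $[A,A]$. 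The crux is therefore to establish the identity
\[
[P_X,P_X]=[A,A],
\]
for then $[[P_X,P_X],g]=[[A,A],g]=1$ in $P_\g$, as required.

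To prove this identity I would use the center. Recall from Section~\ref{purebraid} that $Z:=Z(P_X)$ is infinite cyclic, generated by the full twist $\Delta^2$, and that $\Delta^2$ is the product of all six generators $a_{ij}$, $\{i,j\}\in\binom{X}{2}$, in a fixed order, each occurring once. Let $r\colon P_X \to \Z$ be the homomorphism sending $a_{kl}\mapsto 1$ and the other five generators to $0$; this is well defined since it factors through the abelianization $P_X^{\mathrm{ab}}\cong\Z^6$, and $r(\Delta^2)=1$ because $\Delta^2$ involves $a_{kl}$ exactly once. By construction $A \subseteq \ker r$, while $[P_X,P_X]\subseteq\ker r$ since $\Z$ is abelian, and $Z \cap \ker r = 1$ since $r$ carries the generator $\Delta^2$ of $Z$ to $1$. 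Consequently $A\cap Z = [P_X,P_X]\cap Z = 1$. Moreover $A\cdot Z = P_X$: writing the defining expression for the full twist as $\Delta^2 = L\,a_{kl}\,R$, where $L$ and $R$ are (possibly empty) products of the five generators of $A$, and using that $\Delta^2$ is central, we obtain $a_{kl}=L^{-1}\Delta^2 R^{-1}=\Delta^2 L^{-1}R^{-1}\in Z\cdot A$; hence $P_X=\langle A,a_{kl}\rangle \subseteq AZ$.

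The identity $[P_X,P_X]=[A,A]$ then follows by passing to the quotient $q\colon P_X \to \bar P := P_X/Z$. Since $A\cap Z=1$ and $AZ=P_X$, the restriction $q|_A\colon A \to \bar P$ is an isomorphism, so $q([A,A])=[\bar P,\bar P]=q([P_X,P_X])$. On the other hand $[A,A]\subseteq[P_X,P_X]$, and $q$ is injective on $[P_X,P_X]$ because $\ker q = Z$ and $[P_X,P_X]\cap Z=1$. As $q$ is injective on $[P_X,P_X]$ and $q([A,A])=q([P_X,P_X])$ with $[A,A]\subseteq[P_X,P_X]$, we conclude $[A,A]=[P_X,P_X]$, which finishes the argument.

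I expect the one genuinely computational input to be the surjectivity $AZ=P_X$ --- equivalently, the observation that the omitted generator $a_{kl}$ is recovered from the other five together with the central full twist; everything else is formal. This is the exact analogue, for $P_4$ and its rank-one center, of the step in Lemma~\ref{twoof} where the third generator of $P_3$ is expressed through the other two and $\Delta^2$. Note the method automatically yields the stronger-looking localized statement without requiring $A$ to be normal in $P_X$, since the commutator-subgroup comparison is carried out in the quotient by the center.
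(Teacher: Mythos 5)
Your proof is correct, and it turns on exactly the same key fact as the paper's: since the full twist $\Delta^2=a_{12}a_{13}a_{23}a_{14}a_{24}a_{34}$ involves the omitted generator $a_{kl}$ exactly once, one can solve $a_{kl}=L^{-1}\Delta^2R^{-1}\in Z\cdot A$, so $P_X=AZ$ with $Z=\langle\Delta^2\rangle$ central. The paper states this as ``$P_X$ is generated by $z'$, which is central, and any five of the six elements $a_{ij}$,'' takes $H=A$, concludes $[P_X,P_X]=[H,H]$, and finishes as in Lemma~\ref{twoof}. Where you diverge is only in how you extract $[P_X,P_X]=[A,A]$ from this: your route through the retraction $r\colon P_X\to\Z$, the intersection facts $A\cap Z=[P_X,P_X]\cap Z=1$, and the quotient $q\colon P_X\to P_X/Z$ is valid but heavier than needed. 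Once $P_X=AZ$ with $Z$ central, every commutator collapses directly, $[z^ah,\,z^bh']=[h,h']$ for $h,h'\in A$, giving $[P_X,P_X]\subseteq[A,A]$ (the reverse inclusion is trivial) with no appeal to the abelianization of $P_4$, to the center meeting the commutator subgroup trivially, or to $q|_A$ being an isomorphism. This one-line centrality computation is what the paper's closing phrase ``the result follows as in the proof of Lemma~\ref{twoof}'' amounts to; your version buys nothing extra here, though it does record the true facts that $P_4\cong Z\times(P_4/Z)$-style splittings exist, which the paper only uses later, in Section~\ref{finiteness}.
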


\begin{proof} Let $z'$ be the image of the central element $\Delta^2 \in P_n$ under the quotient map $P_n \to P_X$. Then $P_X$ is generated by $z'$, which is central, and any five of the six elements $a_{ij}$, $\{i,j\} \in \binom{X}{2}$. Letting $H$ denote the subgroup generated by those five elements, one has $[P_X,P_X]=[H,H]$. The result follows as in the proof of Lemma \ref{twoof}.
\end{proof}

\begin{prop}\label{4clique} Suppose there are no 4-cliques in \g\ that intersect in a 3-clique of \g. Let $X$ be a 4-clique of \g\ and $\{r,s\} \not \subseteq X$. Then 
\[
[[P_X,P_X],a_{rs}]=1
\]
 in $P_\g$.
\end{prop}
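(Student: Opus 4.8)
The plan is to mirror the proof of Proposition~\ref{triple}, replacing Lemma~\ref{twoof} by its $4$-clique analogue Lemma~\ref{fiveof}. First I would dispose of the trivial case: if $\{r,s\} \notin E_\g$ then $a_{rs}=1$ in $P_\g$ and there is nothing to prove, so assume $\{r,s\} \in E_\g$. Next I would record the structural consequence of the hypothesis that does the real work: if \g\ had a $5$-clique $\{v_1,\dots,v_5\}$, then $\{v_1,v_2,v_3,v_4\}$ and $\{v_1,v_2,v_3,v_5\}$ would be two $4$-cliques meeting in the $3$-clique $\{v_1,v_2,v_3\}$, contrary to hypothesis. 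Hence every clique of \g\ has at most four vertices, and in particular $X$ is a maximal clique, so $X \in \X(\g)$. By Lemma~\ref{fiveof} it then suffices to show that $a_{rs}$ commutes with $a_{ij}$ for at least five of the six edges $\{i,j\} \in \binom{X}{2}$; equivalently, that at most one such edge is ``bad''.

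Two mechanisms force commutation. When $|\{i,j,r,s\}|=4$, Proposition~\ref{comm} gives $[a_{ij},a_{rs}]=1$ as soon as $\{i,j,r,s\}$ is transverse to $\X(\g)$; since $\{i,j,r,s\}$ is a $4$-element set and there are no $5$-cliques, it is transverse unless it is itself a (maximal) $4$-clique. When $|\{i,j,r,s\}|=3$ -- which happens only when $\{i,j\}$ meets $\{r,s\}$ -- the relation \eqref{commrelns}\,(iii) on the triangle $\{i,j,r,s\}$ yields $[a_{ij},a_{rs}]=1$ provided the third edge of that triangle lies in $\bar{E}_\g$, so that its generator dies in $P_\g$.

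I would then carry out the count in two cases. If $r,s \notin X$, let $A = \{v \in X : \{v,r\},\{v,s\} \in E_\g\}$ be the common neighbours of $r$ and $s$ inside $X$. Then $A \cup \{r,s\}$ is a clique, so $|A|+2 \le 4$ and $|A| \le 2$; and $\{i,j,r,s\}$ is a clique exactly when $i,j \in A$, so the bad edges are precisely the edges inside $A$, of which there are at most $\binom{|A|}{2} \le \binom{2}{2}=1$. If instead $r \in X$ and $s \notin X$ (the two endpoints cannot both lie in $X$), set $A' = \{v \in X : \{v,s\} \in E_\g\}$, which contains $r$. Here $A' \cup \{s\}$ is a clique; were $|A'|=3$, this would be a $4$-clique meeting $X$ in the $3$-clique $A'$, violating the hypothesis, so $|A'| \le 2$. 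A bad edge is now either an edge of $X$ avoiding $r$ with both endpoints in $A'$ (a transversality failure), or an edge $\{r,j\}$ with $\{j,s\} \in E_\g$ (the triangle $\{r,j,s\}$ has all three edges surviving, so relation (iii) is unavailable). Since $A' \setminus \{r\}$ has at most one element, there are no edges of the first kind and at most one of the second.

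In both cases $a_{rs}$ commutes with at least five of the six generators $a_{ij}$, whence Lemma~\ref{fiveof} gives $[[P_X,P_X],a_{rs}]=1$. I expect the main obstacle to be the second case: the edges of $X$ through $r$ produce three-element index sets for which Proposition~\ref{comm} does not apply, forcing a fallback to relation \eqref{commrelns}\,(iii), and one must use the \emph{full} hypothesis -- not merely the absence of $5$-cliques, which already suffices in the first case -- to bound $|A'|$ and thereby hold the number of problematic edges down to one.
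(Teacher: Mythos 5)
Your proof is correct and takes essentially the same route as the paper: commutation via Proposition~\ref{comm} (with relation \eqref{commrelns}\,(iii) for the three-vertex index sets) to show at most one edge of $\binom{X}{2}$ is problematic, then Lemma~\ref{fiveof}. In fact your two-case count is slightly more careful than the paper's one-line assertion that at most one $t\in X$ makes $\{r,s,t\}$ a 3-clique---when $r,s\notin X$ there can be two such vertices $t_1,t_2$ (then $\{r,s,t_1,t_2\}$ is a 4-clique meeting $X$ only in the edge $\{t_1,t_2\}$, which the hypothesis permits), but as your bound $|A|\le 2$ shows, the unique bad edge is then $\{t_1,t_2\}$, so the five-of-six count and the conclusion are unaffected.
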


\begin{proof} Because $X$ is clique and $\{r,s\} \not \subseteq X$, the hypothesis implies there is at most one $t \in X$ such that $\{r,s,t\}$ is a 3-clique. Then $\{a_{ij},a_{rs}\}$ is transverse to $\X(\g)$ for at least five of the six elements $\{i,j\}$ of $\binom{X}{2}$. The claim then follows from Proposition~\ref{comm} and Lemma~\ref{fiveof}.
\end{proof}

We will refer to the hypothesis of Proposition~\ref{4clique} by saying that the 4-cliques of \g\ are {\em almost disjoint}. This property holds when the submatroids of the cycle matroid of \g\ corresponding to the 4-cliques meet in at most one point.

\begin{thm} \label{fourinj} Suppose the 4-cliques of \g\ are almost disjoint. Then
\[
\rho_{\X(\g)} \colon P_\g \to \underset{X \in \X(\g)}{\displaystyle{\prod}} P_X
\]
is injective.
\end{thm}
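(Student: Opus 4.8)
The plan is to deduce injectivity of $\rho_{\X(\g)}$ from Theorem~\ref{inj}, applied to $G=P_\g$ with generating set $Y$ and the family $\X=\X(\g)$ of maximal cliques, exactly as in the proof of Theorem~\ref{k4free}. First I would record the structural input that is already in hand: $\X(\g)$ is a family of cliques, hence retractive by Proposition~\ref{cliqfam}, and since every edge of \g\ lies in a maximal clique we have $Y=\bigcup\X(\g)$. Hypothesis~(i) of Theorem~\ref{inj} is precisely Proposition~\ref{comm}, whose proof makes no use of $K_4$-freeness, only of the fact that a transverse pair does not lie in a common clique; this is built into transversality and so the proposition applies verbatim. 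Thus the entire burden falls on hypothesis~(ii), namely $[[P_X,P_X],a_{rs}]=1$ for every $X\in\X(\g)$ and every $\{r,s\}\not\subseteq X$.

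Before verifying (ii) I would observe that the almost-disjoint hypothesis forbids $K_5$ --- any two of the five $4$-cliques in a $K_5$ meet in a $3$-clique --- so the maximal cliques of \g\ have cardinality $2$, $3$, or $4$, and I can split the verification of (ii) according to $|X|$. If $|X|=2$ then $P_X\cong\Z$ is abelian and $[P_X,P_X]=1$, so (ii) is vacuous. If $|X|=4$ then (ii) is exactly Proposition~\ref{4clique}, which combines Lemma~\ref{fiveof} with the almost-disjoint hypothesis to produce five of the six generators of $P_X$ commuting with $a_{rs}$.

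The remaining, and essential, case is $|X|=3$, where I would re-run the mechanism of Proposition~\ref{triple}: by Lemma~\ref{twoof} it suffices to find two of the three edges $e$ of the triangle $X$ for which $\{a_e,a_{rs}\}$ is transverse to $\X(\g)$, since Proposition~\ref{comm} then gives $[a_e,a_{rs}]=1$ for those two. Transversality of $\{a_e,a_{rs}\}$ is equivalent to $e\cup\{r,s\}$ failing to be a clique, and I would split on $|\{r,s\}\cap X|$. When $\{r,s\}$ is disjoint from $X$, the three sets $e\cup\{r,s\}$ are $4$-sets, any two of which share $\{r,s\}$ together with a single vertex of $X$; were two of them $4$-cliques they would be distinct $4$-cliques meeting in that common $3$-clique, violating almost-disjointness, so at least two of the three edges are transverse. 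When $\{r,s\}$ meets $X$ in a single vertex, say $X=\{i,j,k\}$ and $\{r,s\}=\{i,u\}$, maximality of $X$ prevents $\{i,j,k,u\}$ from being a $4$-clique, so the edge $\{j,k\}$ is transverse; and since $\{i,j,u\}$ and $\{i,k,u\}$ cannot both be cliques (their union would again be the forbidden $4$-clique $\{i,j,k,u\}$), at least one of the edges $\{i,j\},\{i,k\}$ is transverse as well, giving two transverse pairs. The main obstacle is thus entirely in this $3$-clique analysis: unlike the $K_4$-free setting of Proposition~\ref{triple}, where a four-vertex set simply cannot be a clique, here I must use the almost-disjoint hypothesis (disjoint case) and the maximality of the triangle $X$ (overlapping case) to exclude the configurations that would leave fewer than two transverse pairs. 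With (ii) verified in all three cases, Theorem~\ref{inj} yields injectivity of $\rho_{\X(\g)}$.
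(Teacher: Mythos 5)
Your proposal follows the paper's route exactly at the top level: the paper's entire proof of Theorem~\ref{fourinj} is the single sentence that injectivity follows from Theorem~\ref{inj} using Propositions~\ref{modular}, \ref{comm}, \ref{triple}, and \ref{4clique}, and your verification of retractivity, of hypothesis (i) via Proposition~\ref{comm}, and of hypothesis (ii) for 4-cliques via Proposition~\ref{4clique} matches this step for step. Where you genuinely add something is the 3-clique case of hypothesis (ii). The paper simply cites Proposition~\ref{triple}, but that proposition is stated and proved only for $K_4$-free graphs: its proof dismisses transversality with the remark that $X \cup \{r,s\}$ has cardinality at least four, which is no longer decisive once 4-cliques are present, since $e \cup \{r,s\}$ can now itself be a 4-clique. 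Your case analysis supplies precisely the missing verification: when $\{r,s\} \cap X = \emptyset$, two of the three 4-sets $e \cup \{r,s\}$ being cliques would produce distinct 4-cliques sharing a 3-clique, contradicting almost-disjointness; when $|\{r,s\} \cap X| = 1$, maximality of $X$ rules out $\{i,j,k,u\}$ being a clique, which both forces the opposite edge $\{j,k\}$ to be transverse and prevents $\{i,j,u\}$ and $\{i,k,u\}$ from both being cliques (their union, together with the edge $\{j,k\}$ of $X$, would be the forbidden 4-clique). Your preliminary observation that almost-disjointness excludes $K_5$, so that maximal cliques have size at most 4 and the 2-clique case is vacuous, is also correct and is implicitly assumed in the paper. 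In short: same approach, but your write-up makes rigorous the extension of Proposition~\ref{triple} beyond the $K_4$-free setting, a point the paper's one-line citation quietly glosses over.
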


\begin{proof} This now follows from Theorem~\ref{inj} using Propositions~\ref{modular}, \ref{comm}, \ref{triple}, and \ref{4clique}.
\end{proof}

\begin{cor} Suppose the 4-cliques of \g\ are almost disjoint. Then $P_\g$ is torsion-free, residually torsion-free nilpotent, linear, and residually finite. If \g\ is not $K_4$-free then $P_\g$ is not residually free. Moreover, $P_\g$ acts freely and properly on a CAT(0) complex.
\end{cor}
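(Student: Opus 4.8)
The plan is to mirror the proof of Corollary~\ref{props}, substituting the injection of Theorem~\ref{fourinj} for that of Theorem~\ref{k4free} and accounting for the factors isomorphic to $P_4$ that can now occur. First I would record a structural consequence of the hypothesis: the almost-disjointness of the $4$-cliques precludes any $5$-clique, since a $K_5$ contains five copies of $K_4$, any two of which meet in a $K_3$. Hence every maximal clique $X \in \X(\g)$ satisfies $|X| \le 4$, so $P_X$ is isomorphic to $\Z$, to $P_3 \cong F_2 \times \Z$, or to $P_4$, and by Theorem~\ref{fourinj} the map $\rho_{\X(\g)}$ embeds $P_\g$ into the finite product $\prod_{X \in \X(\g)} P_X$ of such groups.

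For the first list of properties I would note that each holds for every pure braid group $P_k$: torsion-freeness and residual finiteness are classical, residual torsion-free nilpotence is \cite{FR88,Mar12}, and linearity follows from the linearity of the braid groups. Each property is preserved under finite direct products and inherited by subgroups (for linearity one embeds the finitely many factor types into $\mathrm{GL}$ over a common field and takes block sums). Applying this to the embedding of the previous paragraph yields all four properties for $P_\g$.

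For the residual-freeness assertion I would argue by contradiction: suppose \g\ is not $K_4$-free and fix a $4$-clique $X_0$. By Proposition~\ref{modular} the set $X_0$ is retractive, so $\langle a_{ij} \mid i,j \in X_0\rangle$ is a subgroup of $P_\g$ isomorphic to $P_4$. Since residual freeness passes to subgroups and $P_4$ is not residually free \cite{CFR11}, the group $P_\g$ cannot be residually free.

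Finally, for the CAT(0) conclusion I would produce, for each $X$, a free and proper action of $P_X$ on a CAT(0) complex $Y_X$, and then let $P_\g$ act on the (finite, hence CAT(0)) product $\prod_X Y_X$ through $\rho_{\X(\g)}$ and the coordinatewise product action; freeness and properness are inherited by the subgroup $P_\g \le \prod_X P_X$. For $|X| \le 3$ the group $P_X$ is a right-angled Artin group and acts freely and properly on a CAT(0) cube complex exactly as in Corollary~\ref{props}. The genuinely new ingredient, which I expect to be the main obstacle, is the factor $P_4$: it is not a right-angled Artin group, so no cube-complex structure is available, which is precisely why the conclusion weakens ``CAT(0) cube complex'' to ``CAT(0) complex.'' Here I would invoke that the braid group $B_4$ acts geometrically on a CAT(0) space (a theorem of Brady), whence its finite-index, torsion-free subgroup $P_4$ acts freely and properly on that same space. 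Assembling the factor actions then gives the desired free and proper action of $P_\g$ on a CAT(0) complex.
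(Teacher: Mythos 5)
Your proposal is correct and follows essentially the same route as the paper's proof: embed $P_\g$ via Theorem~\ref{fourinj} into a finite product of groups $P_X$ with $|X|\le 4$ and use closure of the listed properties under finite direct products and subgroups; deduce non-residual-freeness from the retractive $4$-clique (Proposition~\ref{modular}) together with \cite{CFR11}; and obtain the CAT(0) action from the free and proper actions of $B_n$, $n=2,3,4$ \cite{BraMcC2010} (your attribution to Brady for $B_4$ is the same result the paper cites), assembled over the product and restricted along the embedding. Your explicit preliminary step---that almost disjointness of the $4$-cliques rules out $5$-cliques, so every maximal clique has size at most $4$---is left implicit in the paper but is precisely the justification needed for the factor list $\Z$, $F_2\times\Z$, $P_4$.
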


\begin{proof} Since $P_2\cong \Z$, $P_3\cong F_2 \times \Z$, and $P_4$ have the listed properties, which are preserved under direct products and inherited by subgroups, the first statement follows from Theorem~\ref{fourinj}. Since $P_4$ is not residually free \cite{CFR11}, the second statement follows from Proposition~\ref{modular}. By \cite{BraMcC2010}, $B_n$, $n=2,3,4$, acts freely and properly on a CAT(0) complex, hence $P_\g$ does as well.
\end{proof}

In this setting the geometric action may or may not be cocompact. It is not known whether $P_4$ is an a-T-menable group.

The hypothesis of Theorem~\ref{fourinj} is not strictly necessary, as we have already seen in Example~\ref{ex:deletedK5}. As in this example, if \g\ is a graph on at most $|V|=5$ vertices, then either \g\ is $K_4$-free or \g\ has a $(|V|-1)$-clique. In either instance, the map $\rho_{\X(\g)}$ is injective. 

\begin{ex} \label{ex:NOTinj}
For the graph $\g$ on $V=[6]$ with $E_\g=\binom{V}{2} \setminus \left\{\{4,5\},\{1,6\}\right\}$, the family of maximal cliques is
\[
\X(\g)=\left\{\{1,2,3,4\},\{1,2,3,5\},\{2,3,4,6\},\{2,3,5,6\}\right\};
\] 
see the accompanying figure. The graph $\g$ has 4-cliques meeting in 3-cliques. Also, the induced subgraph $\g_{1456}$ is a 4-cycle, so \g\ is not chordal. 
Nevertheless, the map $\rho_{\X(\g)}$ is still injective, as we now sketch.

\begin{figure}\label{fig:bipyramid}
\centering
\includegraphics[scale=.25]{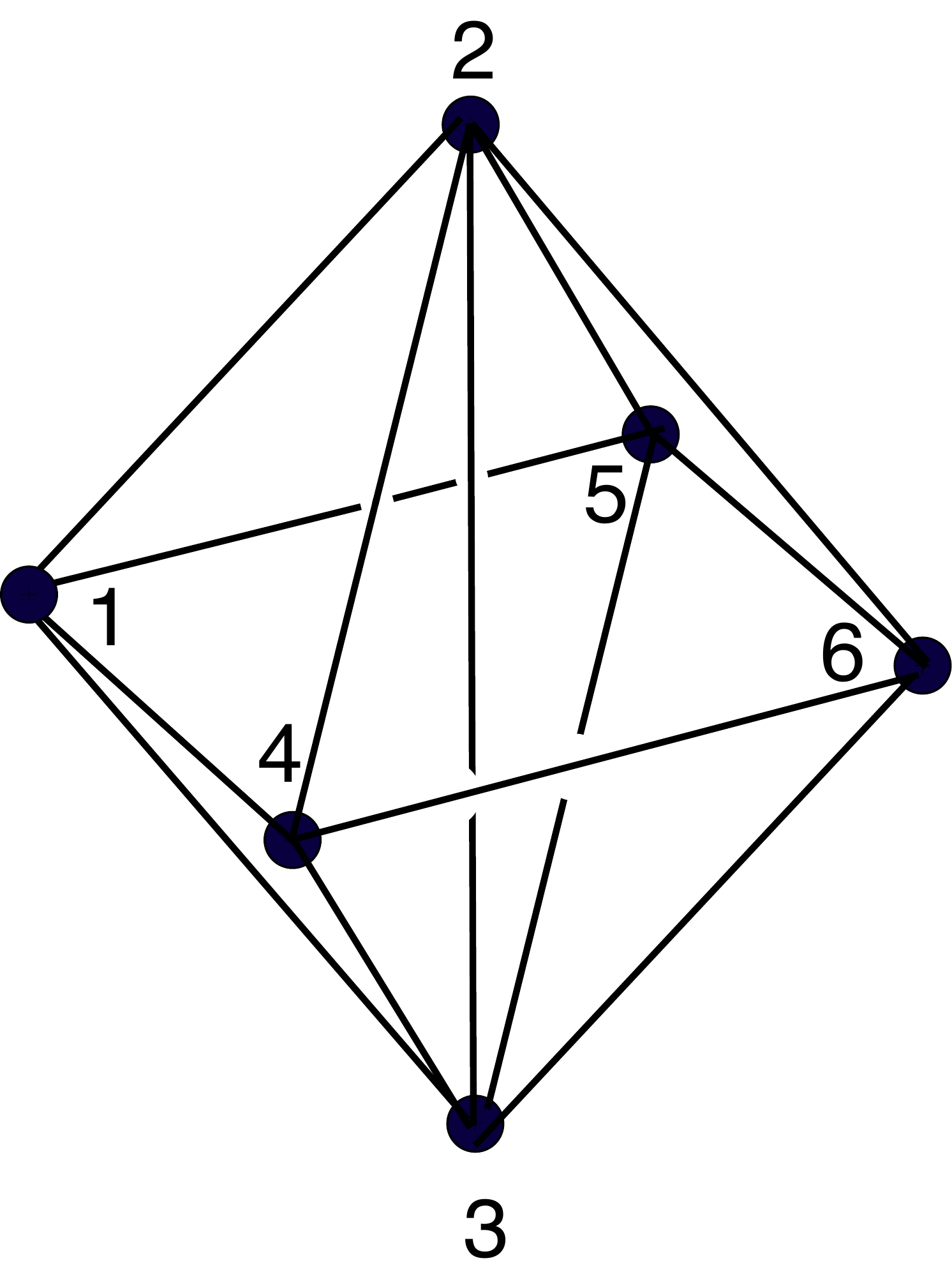} \hskip .5 truein
\includegraphics[scale=.23]{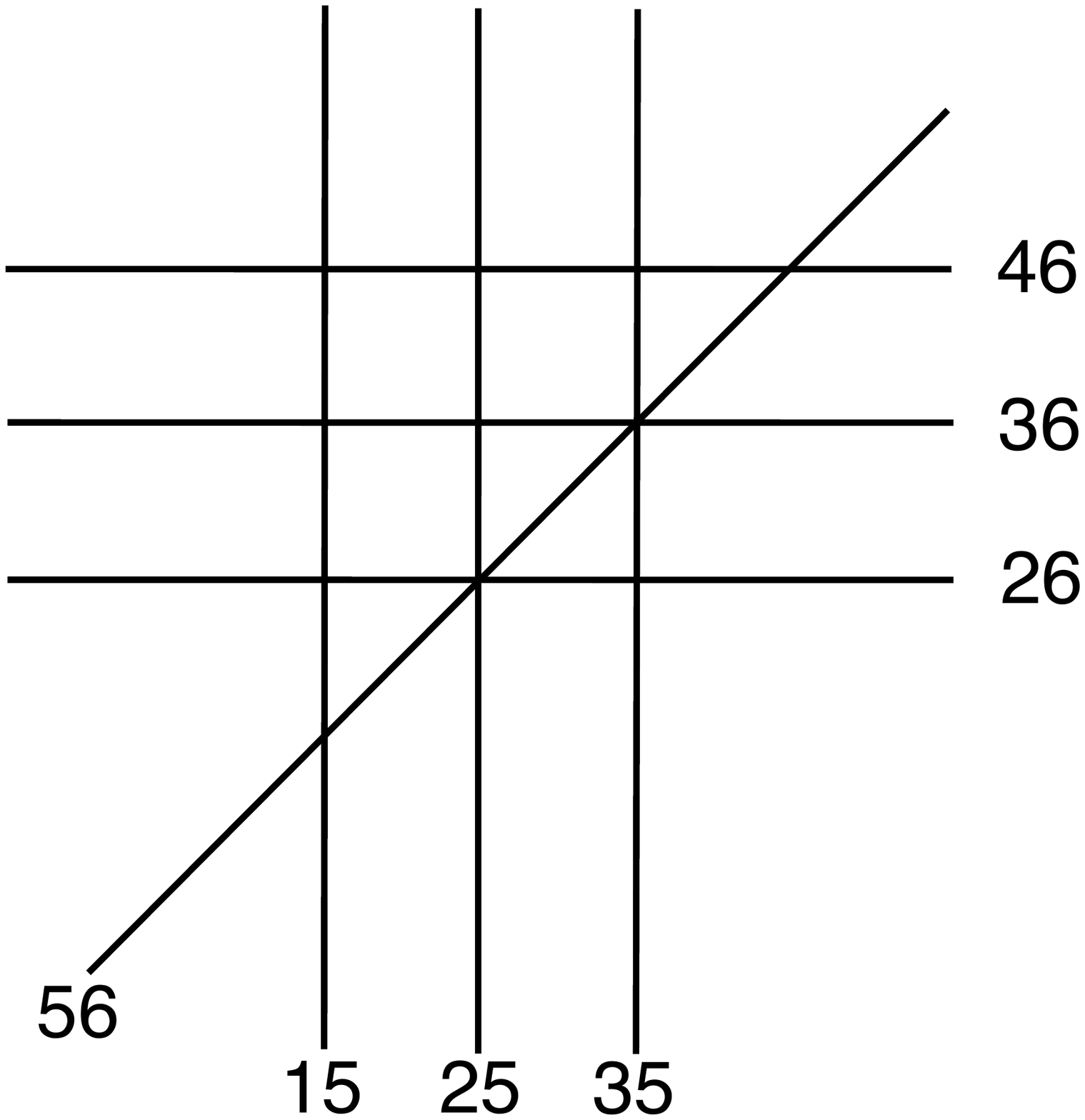}
\caption{The graph \g\ and arrangement $\A^{\g}_{6,4}$ of Example~\ref{ex:NOTinj}.}
\end{figure}

The homomorphism  $\rho_{\X(\g)}$ is given by
\[
\rho_{\X(\g)} \colon P_\g \longrightarrow P_{1234} \times P_{1235} \times P_{2346} \times P_{2356} \cong P_4 \times P_4 \times P_4 \times P_4.
\]
Let $P^\g_{6,4}=P_{6,4}/\langle\langle a_{45},a_{16}\rangle\rangle$ be the group of the graphic discriminantal arrangement $\A^\g_{6,4}$ associated to  
\g\ and the clique $[4]$, 
see Remark \ref{rem:deleted disc}. 
The map $\rho_\X$ is injective if and only if the restriction $\psi=\rho_\X|_{P^\g_{6,4}}$ is. This restriction has image  contained in the subgroup $F_3 \times F_3 \times \bar{P}_{2356}$ of $P_{1235}\times P_{2346} \times P_{2356}$, where $F_3<P_{1235}$ is generated by $\{a_{15},a_{25},a_{35}\}$, $F_3<P_{2346}$ is generated by $\{a_{26},a_{36},a_{46}\}$, and $\bar{P}_{2356}=\ker(P_{2356} \to P_{23})$.

The group $P^\g_{6,4}$ has generating set $Y=\{a_{ij}\mid i<j, j\in\{5,6\}, ij\neq 45,16\}$, and relations given by the pure braid relations in $P_{2356}$ (not involving $a_{23}$), along with (after simplification using Remark \ref{rem:equiv pure braid rels}) $[a_{15},a_{i6}]=1$, $2\le i \le 6$, $[a_{i5},a_{46}]=1$, $1\le i \le 3$, and $[a_{46},a_{56}]=1$. By Proposition \ref{conjfree}, the family 
\[
\X=\{S_1,S_2,S_3\}=\{\{a_{15},a_{25},a_{35}\},\{a_{26},a_{36},a_{46}\},\{a_{25},a_{35},a_{26},a_{35},a_{56}\}\}
\] 
of subsets of $Y$ is retractive. 

Theorem \ref{inj} does not inform on the kernel of $\psi=\rho_\X$. Using the relations in $P^\g_{6,4}$, one can check that $[[a_{36}^{}a_{35}^{},a_{36}^{}a_{25}^{}],a_{15}^{}]= [[a_{35}^{},a_{25}^{}],a_{15}^{}]$. This last expression is nontrivial in the (free) subgroup of $G=P^\g_{6,4}$ generated by $\{a_{15},a_{25},a_{35}\}$. Hence, $[[G_{S_3},G_{S_3}],a_{15}]\neq 1$, and condition Theorem \ref{inj} (ii) fails. We instead utilize the results (and terminology) of \cite[Section 3]{CFR10}, in particular, \cite[Thm.~3.2.13]{CFR10}, which shows that the kernel of $\psi = \rho_\X$ is generated by monic commutators in $P^\g_{6,4}$ transverse to $\X$.

Let $w=\beta^p(y_1^{\epsilon_1},\dots,y_p^{\epsilon_p})$ be such a monic commutator, a weight $p$ bracket arrangement, where $y_i\in Y$ is a generator of $P^\g_{6,4}$ and $\epsilon_i=\pm 1$ for each $i$, see \cite{MKS76,CFR10} for details. Since $w$ is transverse to $\X$, $w$ must involve at least one generator of $P^\g_{6,4}$ not in $S_3$. Without loss, assume $w$ involves $a_{15}$. Then, $w$ must also involve one of the generators $a_{i6}$, $2\le i\le 5$, not in $S_1$. We show that $w=1$ in $P^\g_{6,4}$ by induction on the bracket weight $p\ge 2$.

In the base case $p=2$, up to conjugation and taking inverses, $w=[a_{i6},a_{15}]=1$, since $a_{15}$ commutes with $a_{i6}$ in $P^\g_{6,4}$ as noted above. For general $p \ge 2$, write
$w=\beta^p(x_1, \ldots, x_p)=[\beta^k(x_1 \ldots, x_k),\beta^\ell(x_{k+1}, \ldots, x_p)]$, where $x_i=y_i^{\epsilon_i}$ and $p=k+\ell$ with $k,\ell\ge 1$. If $\{a_{15},a_{i6}\} \subseteq \{y_1,\dots,y_k\}$, then $\beta^k(x_1 \ldots, x_k)=1$ by induction, and $w=1$. Similarly, $w=1$ if $\{a_{15},a_{i6}\} \subseteq \{y_{k+1},\dots,y_p\}$. 

Without loss, assume that $a_{i6} \in \{y_1,\dots,y_k\}$ and $a_{15} \in \{y_{k+1},\dots,y_p\}$. The subgroup $N$ generated by $\{a_{26},a_{36},a_{46},a_{56}\}$ is normal in $P^\g_{6,4}$, so $[P_{6,4}^\g,N]\subseteq N$. Since $x_i \in N$, it follows that $\beta^{k}(x_1, \ldots, x_{k}) \in N$. Since $a_{15}$ commutes with all the generators of $N$ in $P^\g_{6,4}$, we have $w=[\beta^{k}(x_1, \ldots, x_{k}),a_{15}]=1$. If $\ell>1$, write $\beta^\ell(x_{k+1}, \ldots, x_p)=[u,v]$, where $u$ and $v$ are monic commutators of weight at least one. Assume $u$ involves $a_{15}$ (from which the case $v$ involves $a_{15}$ follows easily). Write $\beta=\beta^{k}(x_1, \ldots, x_{k})$. Inducting on $\ell$ and noting that $[\beta,v^{-1}]\in N$, we have $[\beta,u]=1$ and $[[v,\beta],u^v]=[[\beta,v^{-1}],u]^v=1$ by the inductive hypothesis. 
Then the Hall-Witt identity~\cite[Thm.~5.1\,(11)]{MKS76} yields
\[
w^{-1}=[[u,v],\beta]=[[u,v],\beta^u]=[u^v,[v,\beta]]\cdot [v^\beta,[\beta,u]]=1.
\] 
This completes the induction on $\ell$, and hence the induction on $p$. 
It follows that the maps $\psi$ and $\rho_{\X(\g)}$ are injective.
\end{ex}

\begin{rem} 
\label{rem:seven}
The homomorphism $\rho_{\X(\g)}$ is, in fact, injective for any graph on fewer than 7 vertices. As noted previously, this is the case for graphs on at most 5 vertices. For the complete graph $K_n$, the map $\rho_{\X(K_n)}$ is simply the identity map of the pure braid group $P_n$. If $\g'$ is obtained from $K_6$ by either deleting a single edge, or by deleting two adjacent edges, then $\g'$ satisfies the hypotheses of Corollary \ref{cor:neighbor},
which may be used to show that $\rho_{\X(\g')}$ is injective. Deleting two nonadjacent edges from $K_6$ results in a graph isomorphic the graph \g\ of Example \ref{ex:NOTinj}.

It remains to consider subgraphs obtained from \g\ by deleting edges. 
Deleting the edge $\{2,3\}$ results in a $K_4$-free graph $\g'$, and $\rho_{\X(\g')}$ is injective by Theorem \ref{k4free}. Deleting any one of the edges in the 4-cycle subgraph $\g_{1456}$ results in a graph $\g'$ satisfying the hypotheses of Corollary \ref{cor:neighbor}, 
which may be used to show that $\rho_{\X(\g')}$ is injective. Deleting one of the remaining edges, say $\{2,6\}$, results in a graph $\g'$ for which the associated graphic discriminantal arrangement $\A^{\g'}_{6,4}$ is a decone of the arrangement $\A(3,2,2)$ considered in \cite[Section 4.4]{CFR10}. The injectivity of the restriction of $\rho_{\X(\g')}$ to the graphic 
discriminantal arrangement group 
$P^{\g'}_{6,4}=P_{6,4}/\langle\langle a_{45},a_{16},a_{26}\rangle\rangle$ is established in \cite[Thm.~4..4.1]{CFR10}. As above, it follows that $\rho_{\X(\g')}$ is injective.

Deleting 2 or more edges from \g\ results in graphs $\g'$ which are either covered by the above considerations, or have almost disjoint 4-cliques. In the last instance, $\rho_{\X(\g')}$ is injective by Theorem \ref{fourinj}.
\end{rem}

Example \ref{ex:NOTinj} and Remark \ref{rem:seven} 
lead us to propose the following problems:

\begin{problem} \ 
\begin{enumerate}
\item[(a)] Characterize the graphs \g\ for which $\rho_{\X(\g)}$ is injective.

\smallskip

\item[(b)] Characterize the graphs \g\ for which $P_\g$ embeds in a product of pure braid groups.
\end{enumerate}
\end{problem}

\section{Homological finiteness type}
\label{finiteness}
We return to the general setting of Section~\ref{retractive} to recall some further results from \cite[Section 4]{CFR10}. Let $G$ be a group with finite generating set $Y=\{y_1, \ldots, y_n\}$. Assume that $z=y_1 y_2\cdots y_n$ is central in $G$, and let $\bar{G}=G/\langle z \rangle$. Let $\X \subseteq 2^Y$, and for $S \in \X$ let $\rho_S \colon G \to G_S$ be the canonical quotient map and $\rho_\X = \prod \rho_S \colon G \to \underset{S \in \X}{\displaystyle{\prod}} G_S$ as in Section~\ref{retractive}.

Then $\rho_S(z)$ is central in $G_S$; let $\bar{G}_S=G_S/\langle \rho_S(z) \rangle$. Then there is a homomorphism $\bar{\rho}_\X \colon \bar{G} \to \underset{S \in \X}{\displaystyle{\prod}} \bar{G}_S$ and a commutative diagram
\[
\xymatrix{
G \ar[r]^-{\rho_\X} \ar[d] & \underset{S \in \X}{\displaystyle{\prod}} G_S \ar[d]\\
\bar{G} \ar[r]^-{\bar{\rho}_\X} &  \underset{S \in \X}{\displaystyle{\prod}} \bar{G}_S.
}
\]

Let $\L_\X$ be the {\em incidence graph} of $Y$ with \X, that is, $\L_\X$ is the bipartite graph with vertex set $Y \cup \X$ and edges $\{y,S\}$ for $y \in S$. In the statements below, $\L_\X$ is to be considered as a 1-dimensional cell complex. The next three propositions are proved in \cite{CFR10}.

\begin{prop}\label{proj} Suppose $\L_\X$ is connected. Then the kernel of $\rho_\X$ projects isomorphically onto the kernel of $\bar{\rho}_\X$. In particular, if $\rho_\X$ is injective, then $\bar{\rho}_\X$ is injective.
\end{prop}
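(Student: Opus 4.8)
The plan is to study the restriction of the central quotient map $q\colon G \to \bar{G}=G/\langle z\rangle$ to $K=\ker(\rho_\X)$, and to show directly that it is an isomorphism onto $\bar{K}=\ker(\bar{\rho}_\X)$; the final assertion is then immediate, since $\rho_\X$ injective means $K=1$, whence $\bar{K}\cong K=1$. Writing $z_S=\rho_S(z)$, the kernel of the right-hand vertical map $\prod_S G_S \to \prod_S \bar{G}_S$ is the central subgroup $\prod_{S\in\X}\langle z_S\rangle$. Commutativity of the square shows $q(K)\subseteq\bar{K}$, so $q$ restricts to a homomorphism $q|_K\colon K \to \bar{K}$; concretely, for $g\in G$ one has $q(g)\in\bar{K}$ if and only if $\rho_S(g)\in\langle z_S\rangle$ for every $S$, say $\rho_S(g)=z_S^{m_S}$ with $m_S\in\Z$.

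The heart of the argument will be a bookkeeping of these exponents $m_S$ via total-exponent functionals. For each generator $y$ lying in some $S\in\X$, I would choose such an $S$ and let $e^S_y\colon G_S \to \Z$ record the exponent of $y$ in the (free abelian, in our setting) abelianization of $G_S$, and set $e_y=e^S_y\circ\rho_S\colon G\to\Z$. Evaluating on the generating set $Y$ shows $e_y$ sends $y\mapsto 1$ and every other generator to $0$, so $e_y$ is independent of the chosen $S\ni y$. Since each $z_S$ is the product of the generators indexed by $S$, each taken once, we get $e^S_y(z_S)=1$, hence for $g$ as above $e_y(g)=e^S_y(z_S^{m_S})=m_S$ for every $y\in S$. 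Thus $m_S=e_y(g)$ whenever $y\in S$, so $m_S=m_{S'}$ whenever $S$ and $S'$ share a generator. Since $\L_\X$ is connected, any two members of $\X$ are joined by a path alternating through shared generators, and it follows that all the $m_S$ coincide with a common value $k=e_y(g)$. Then $\rho_S(gz^{-k})=z_S^{m_S-k}=1$ for every $S$, so $gz^{-k}\in K$ and $q(gz^{-k})=q(g)$; this gives surjectivity of $q|_K$.

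Injectivity I expect to be the easy direction: if $z^k\in K$ then $z_S^k=\rho_S(z^k)=1$ for every $S$, and applying $e^S_y$ gives $k=e^S_y(z_S^k)=0$, so $K\cap\langle z\rangle=1$ and $q|_K$ is injective. The main obstacle will be the surjectivity step, where connectivity of $\L_\X$ is indispensable: without it the exponents $m_S$ could vary from one component to another, the correction term $z^{-k}$ could not be chosen uniformly, and $q|_K$ would fail to be onto, with the obstruction living in the cokernel $\prod_S\langle z_S\rangle/\langle(z_S)_S\rangle$. Setting up the functionals $e_y$ correctly — in particular verifying their independence of $S$ and the identity $m_S=e_y(g)$ — is the technical crux on which the exponent-matching depends.
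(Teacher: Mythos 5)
Your proof is correct, and it takes essentially the route of the paper, which offers no argument of its own here but defers to \cite{CFR10}: the exponent functionals $e_y$ (well defined because $G^{ab}$ is free abelian on $Y$ in this setting, so each $G_S^{ab}\cong\Z^{S}$), the identity $m_S=e_y(g)$ for $y\in S$, propagation of the common value $k$ along paths in the connected bipartite graph $\L_\X$, and the correction $gz^{-k}\in\ker\rho_\X$ together with $K\cap\langle z\rangle=1$ are exactly the mechanism behind the cited result. You were right to flag the free-abelianization hypothesis explicitly rather than treat it as cosmetic: the proposition is false for general $(G,Y,z)$ with $\L_\X$ connected (torsion in the abelianizations lets $\rho_S(g)$ land in $\langle z_S\rangle$ with no consistent choice of exponent), so your parenthetical ``in our setting'' is carrying real weight, being supplied by the remark in Section~\ref{retractive} that $G/[G,G]\cong\Z^{|Y|}$ holds for the groups considered here.
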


\begin{prop} \label{norm} Suppose $|S \cap S'|\leq 1$ for all $S\neq S'$ in $\X$. Then the image of $\bar{\rho}_\X$ is normal in $\underset{X \in \X}{\displaystyle{\prod}}\bar{G}_S$, and the quotient $\left(\underset{S \in \X}{\displaystyle{\prod}} \bar{G}_S\right)\Big{\slash}\bar{\rho}_\X(\bar{G})$ is isomorphic to $H^1(\L_\X, \Z)$. In particular, $\left(\underset{S \in \X}{\displaystyle{\prod}} \bar{G}_S\right)\Big{\slash}\bar{\rho}_\X(\bar{G})$ is a free abelian group of rank equal to the first betti number of $\L_\X$.
\end{prop}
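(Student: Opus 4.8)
The plan is to reduce Proposition~\ref{norm} to a computation in abelian groups: first I would show that the image of $\bar{\rho}_\X$ contains the commutator subgroup of $\prod_{S\in\X}\bar{G}_S$, which simultaneously yields normality and forces the quotient to be a quotient of the abelianization; then I would identify that abelian cokernel with the cellular cochain group $H^1(\L_\X,\Z)$.

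For the first step, write $\eta_y=\bar{\rho}_\X(y)$ for $y\in Y$, so that the $S$-coordinate of $\eta_y$ is the image $\bar{y}^{(S)}$ of $y$ in $\bar{G}_S$ when $y\in S$, and is trivial otherwise. The crucial observation is that for $a,b\in S$ with $a\neq b$, the hypothesis $|S\cap S'|\le 1$ forces $\{a,b\}\not\subseteq S'$ for every $S'\neq S$, so that $[\eta_a,\eta_b]$ is supported on the single factor $\bar{G}_S$, where it equals $[\bar{a}^{(S)},\bar{b}^{(S)}]$. Next, conjugation by $\eta_a$ preserves the subgroup of $\prod_S\bar{G}_S$ of elements supported on $\bar{G}_S$ and acts there by conjugation by $\bar{a}^{(S)}$. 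Hence the set $T_S$ of elements of the image supported on $\bar{G}_S$ is a subgroup containing every basic commutator $[\bar{a}^{(S)},\bar{b}^{(S)}]$ and closed under conjugation by the generators $\bar{a}^{(S)}$, $a\in S$; since $[\bar{G}_S,\bar{G}_S]$ is the normal closure of the basic commutators, we get $T_S\supseteq[\bar{G}_S,\bar{G}_S]$. Multiplying over $S$, the image contains $\prod_S[\bar{G}_S,\bar{G}_S]=[\prod_S\bar{G}_S,\prod_S\bar{G}_S]$, which establishes normality of $\bar{\rho}_\X(\bar{G})$ and shows the quotient is a quotient of $\prod_S(\bar{G}_S)^{\mathrm{ab}}$.

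For the second step, I pass to abelianizations, using (as holds in our setting) that $G^{\mathrm{ab}}\cong\Z^Y$ is free on $Y$. Then $(G_S)^{\mathrm{ab}}=\Z^S$, and since $\rho_S(z)$ abelianizes to $\sum_{y\in S}e_y$, we have $(\bar{G}_S)^{\mathrm{ab}}=\Z^S/\langle\sum_{y\in S}e_y\rangle$; collecting factors, $\prod_S(\bar{G}_S)^{\mathrm{ab}}=\Z^{\mathrm{inc}}/\langle\sum_{y\in S}e_{(y,S)}:S\in\X\rangle$, where $\Z^{\mathrm{inc}}=\bigoplus_S\Z^S$ is free on the incidences $(y,S)$, $y\in S$. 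The abelianized map $\bar{\rho}_\X^{\mathrm{ab}}$ sends $[y]$ to $\sum_{S\ni y}e_{(y,S)}$. Thus the cokernel is $\Z^{\mathrm{inc}}$ modulo the two families $\{\sum_{y\in S}e_{(y,S)}\}_{S\in\X}$ and $\{\sum_{S\ni y}e_{(y,S)}\}_{y\in Y}$, which are precisely (up to orientation sign) the images of the basis vectors of $\Z^\X$ and $\Z^Y$ under the cellular coboundary $d\colon C^0(\L_\X)=\Z^Y\oplus\Z^\X\to C^1(\L_\X)=\Z^{\mathrm{inc}}$. Hence the cokernel is $\operatorname{coker}(d)=H^1(\L_\X,\Z)$, and the final clause follows since the first cohomology of a graph is free abelian of rank its first Betti number.

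I expect the main obstacle to be the first step, namely verifying that the image contains each $[\bar{G}_S,\bar{G}_S]$ exactly. The single-factor support of $[\eta_a,\eta_b]$ is the only place the hypothesis $|S\cap S'|\le 1$ is used, and promoting the basic commutators to the full commutator subgroup requires the conjugation-closure observation together with the normal-generation of $[\bar{G}_S,\bar{G}_S]$ by basic commutators. By contrast, once the quotient is known to be abelian, the identification with $H^1(\L_\X,\Z)$ is merely a matter of recognizing the cochain complex of $\L_\X$, the only care being the bookkeeping of the two relation families and orientation signs.
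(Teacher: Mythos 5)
Your proof is correct. Note that the paper does not prove Proposition~\ref{norm} itself --- it is quoted from \cite{CFR10} --- and your route is in substance the one underlying that citation: the hypothesis $|S\cap S'|\le 1$ is used exactly once, to force each commutator $[\bar{\rho}_\X(a),\bar{\rho}_\X(b)]$ with $a\neq b$ in $S$ to be supported in the single factor $\bar{G}_S$; conjugation-closure (valid since the images of $c\in S$ generate $\bar{G}_S$, and $[\bar{G}_S,\bar{G}_S]$ is the normal closure of commutators of generators) then places all of $\prod_{S\in\X}[\bar{G}_S,\bar{G}_S]=\bigl[\prod_{S}\bar{G}_S,\prod_{S}\bar{G}_S\bigr]$ inside the image, giving normality, after which the cokernel is an abelian-group computation recognized as the cokernel of the cellular coboundary $C^0(\L_\X,\Z)\to C^1(\L_\X,\Z)$, hence $H^1(\L_\X,\Z)$, free abelian of rank $b_1(\L_\X)$. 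One point you correctly flagged in passing should be stated as a genuine hypothesis rather than an aside: the identification $(\bar{G}_S)^{\mathrm{ab}}\cong\Z^S/\bigl\langle\sum_{y\in S}e_y\bigr\rangle$ requires that $G/[G,G]$ be free abelian on (the images of) $Y$; without this, $(G_S)^{\mathrm{ab}}$ can be a proper quotient of $\Z^S$ and the cokernel can acquire torsion or drop rank, so the proposition is implicitly asserted under the standing assumption announced in Section~\ref{retractive} (``this will be the case in our setting''), which does hold for $G=P_\g$ since the meridian generators freely generate $H_1(U_\g)$. Also worth a sentence in a write-up: passing from $G$ to $\bar{G}=G/\langle z\rangle$ does not change the image in the abelianized target, since $z$ maps into the subgroup generated by the per-generator relations $\sum_{S\ni y}e_{(y,S)}$, so your cokernel computation is unaffected.
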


Now suppose $\bar{G}_S$ is a free group for every $S \in \X$. Then one can apply the results of \cite{MMV01} to identify the homological finiteness type of $\bar{\rho}_\X(\bar{G})$. Refer to \cite{CFR10} for the details of the computation. Recall, a group $G$ is of type $FP_m$ if there is a partial resolution of $\Z$ as a trivial $\Z[G]$-module of finite type and length at most $m$. If $G$ is finitely-presented, then $G$ is of type $FP_m$ if and only if there is a $K(G,1)$ space with finite $m$-skeleton, in which case $G$ is said to have type $F_m$. Our groups are all finitely-presented, but to avoid a conflict of notation in the sequel, we will use the former notion. 

\begin{prop}[\cite{CFR10}] \label{type} Suppose $\L_\X$ has no isthmuses and contains a cycle. Then $\bar{\rho}(\bar{G})$ is of type $FP_{m-1}$ and not of type $FP_m$, where $m=|\X|$.
\end{prop}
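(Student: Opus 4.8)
The plan is to realize $\bar\rho(\bar{G})$ as the kernel of a map from a product of free groups onto a free abelian group, and then read off its homological finiteness type from the $\Sigma$-invariants of that product, as computed in \cite{MMV01}. First I would record the exact sequence supplied by Proposition~\ref{norm}: writing $P=\prod_{S\in\X}\bar{G}_S$ and letting $b$ be the first Betti number of $\L_\X$, the image $\bar\rho(\bar{G})$ is the kernel of a surjection $\pi\colon P\to H^1(\L_\X,\Z)\cong\Z^b$. Since each $\bar{G}_S$ is free, $P$ is a product of free groups, hence a right-angled Artin group, and $P/\bar\rho(\bar{G})$ is free abelian. This is exactly the situation in which the finiteness properties of the kernel are controlled by the Bieri--Neumann--Strebel--Renz invariants of $P$: by the Bieri--Renz criterion, $\bar\rho(\bar{G})$ is of type $FP_k$ if and only if the character subsphere $S(P,\bar\rho(\bar{G}))=\{[\chi]:\chi|_{\bar\rho(\bar{G})}=0\}$ is contained in $\Sigma^k(P;\Z)$. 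The identification $\operatorname{Hom}(\Z^b,\mathbb{R})=H_1(\L_\X,\mathbb{R})$ shows that this subsphere is the projectivized cycle space of $\L_\X$, and that the support of a character (the set of free factors on which it is nonzero) corresponds to the set of vertices of $\X$ met by the associated $1$-cycle.

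Next I would invoke the computation of $\Sigma^k(P;\Z)$ for a product of free groups from \cite{MMV01}, following the Stallings--Bieri pattern: a character lies in $\Sigma^k(P;\Z)$ precisely when its support has at least $k+1$ elements. Consequently $S(P,\bar\rho(\bar{G}))\subseteq\Sigma^k(P;\Z)$ if and only if every nonzero class in the cycle space of $\L_\X$ meets at least $k+1$ vertices of $\X$, so the whole statement reduces to estimating the minimal $\X$-support over the cycle space. With this reduction the two graph-theoretic hypotheses are meant to do the remaining work. The ``not $FP_m$'' half is immediate and solid: since $\L_\X$ contains a cycle, the cycle space is nonzero, and any single cycle meets at most all $m=|\X|$ vertices of $\X$, so it gives a class of support $\le m$, hence outside $\Sigma^m(P;\Z)$; thus the subsphere is not contained in $\Sigma^m(P;\Z)$ and $\bar\rho(\bar{G})$ is not of type $FP_m$. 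The ``$FP_{m-1}$'' half is where the absence of isthmuses enters: the goal is to show that under the standing incomparability and intersection hypotheses on $\X$, no nonzero class of $H_1(\L_\X,\Z)$ can be confined to fewer than $m$ vertices of $\X$, so that the subsphere lands in $\Sigma^{m-1}(P;\Z)$.

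The main obstacle I anticipate is exactly this last translation, namely converting the homological threshold ``support $\ge m$ for every nonzero cycle'' into a combinatorial statement about isthmuses of $\L_\X$. This is the combinatorial core of \cite{CFR10}, and it is delicate: the incomparability of the sets in $\X$ and the bound $|S\cap S'|\le 1$ from Proposition~\ref{norm} (which already forces every cycle to meet at least three vertices of $\X$), together with the structure of $\L_\X$ coming from maximal cliques in the intended application, must be used to rule out a nonzero cycle supported on a proper subset of $\X$. I would attack this by analyzing how the edges of a prospective low-support cycle distribute among the $\X$-vertices and showing that the no-isthmus condition, in the relevant incidence geometry, prevents any such cycle from cancelling away its incidences at some vertex of $\X$; the detailed verification is where the argument must be carried out with care, and where I would defer to the combinatorial analysis of $\L_\X$ in \cite{CFR10}.
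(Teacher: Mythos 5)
Your plan is the same one the paper points to: the paper prints no proof of Proposition~\ref{type}, deferring to \cite{CFR10}, and the route indicated there is exactly yours --- realize $\bar{\rho}(\bar{G})$ as the kernel of $\prod_{S\in\X}\bar{G}_S \to \Z^b$ via Proposition~\ref{norm}, identify the characters vanishing on it with the real cycle space of $\L_\X$, and feed this into the Bieri--Renz criterion together with the computation from \cite{MMV01} that, for a product of free groups of rank at least two, a character lies in $\Sigma^k$ exactly when it is nonzero on at least $k+1$ factors. That reduction is correct (with the small caveat that the Proposition only assumes each $\bar{G}_S$ is free, so cyclic factors would have to be discarded from your support count), and your ``not $FP_m$'' half is complete: the subsphere is nonempty because $\L_\X$ contains a cycle, and every character is trivially supported on at most $m$ factors, hence lies outside $\Sigma^m$.

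The gap is the $FP_{m-1}$ half, and it is not one you can close by deferral, because the statement you reduce to is false under the hypotheses you have on the table. You need: every nonzero class in $H_1(\L_\X;\R)$ meets all $m$ vertices of $\X$. This does \emph{not} follow from ``no isthmuses and contains a cycle,'' even after adding connectivity of $\L_\X$ and the standing bound $|S\cap S'|\le 1$ (which, as you note, only forces girth at least $6$, hence support at least $3$). A concrete witness inside this paper's own graphic setting: let \g\ be the octahedron, so every maximal clique is a triangle, every edge lies in exactly two triangles, and $\L_\X$ is connected and bridgeless with $m=8$ and $b_1(\L_\X)=5$. The four triangles around a fixed vertex of \g\ form an $8$-cycle in $\L_\X$ whose class misses the other four triangles; the corresponding character has support $4$, so it lies outside $\Sigma^4$, and in particular the subsphere is not contained in $\Sigma^{m-1}=\Sigma^7$. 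Thus the combinatorial core you postpone cannot be established ``with care'' from isthmus-freeness: what your argument actually proves is that $\bar{\rho}(\bar{G})$ is of type $FP_{s-1}$ and not of type $FP_s$, where $s$ is the minimal number of $\X$-vertices met by a nonzero cycle of $\L_\X$, and $s<m$ is entirely possible. The two notions agree when $b_1(\L_\X)=1$, since a connected bridgeless graph with a single independent cycle \emph{is} that cycle and passes through all of $\X$, reducing everything to the Stallings--Bieri computation; but for $b_1\ge 2$ you must either locate the stronger standing hypotheses actually in force in \cite{CFR10} (the statement as quoted here evidently suppresses some of them) or replace the minimal-support estimate with a genuinely different argument. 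As written, the proposal establishes the negative half and only a weaker form of the positive half.
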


Let \g\ be a graph with vertex set $V=[n]$ and edge set $E_\g \subseteq \binom{V}{2}$. We apply the results stated above with $G=P_\g$, $Y=\{a_{ij} \mid \{i,j\} \in E_\g\}$, and  $\X=\X(\g) \subseteq 2^Y$ corresponding to the set of maximal cliques of \g. 

Let $z \in P_\g$ be the image of the full twist $\Delta^2$ under the quotient map $P_n \to  P_\g$. Then $z$ is central in $P_\g$ and $z=y_1 y_2\cdots y_m$, where $Y=\{y_1, \ldots, y_m\}$ is labeled compatibly with the factorization of $\Delta^2$ from Section~\ref{purebraid}. The group $\bar{P}_\g=P_\g/\langle z \rangle$ is isomorphic to the fundamental group of the complement of the projectivization of the graphic arrangement $\A_\g$.

\begin{thm}\label{graphtype} Suppose \g\ is a connected graph in which every maximal clique has cardinality three. Let $m$ be the number of 3-cliques in \g, and let $\L_\g$ be the incidence graph of edges and 3-cliques in \g.

\begin{enumerate}
\item If $b_1(\L_\g)=0$ then $\bar{P}_\g$ is isomorphic to $(F_2)^m$.
\item If $b_1(\L_\g)>0$ and $\L(\g)$ has no isthmuses, then $\bar{P}_\g$ is of type $FP_{m-1}$ and not of type $FP_m$.
\end{enumerate}
\end{thm}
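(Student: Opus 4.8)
、The plan is to apply the three propositions from this section (\ref{proj}, \ref{norm}, \ref{type}) with the specific choices $G = P_\g$, $Y = \{a_{ij} \mid \{i,j\} \in E_\g\}$, and $\X = \X(\g)$ the family of maximal cliques. The crucial input is Theorem~\ref{k4free}, which tells us that $\rho_{\X(\g)}$ is injective when \g\ is $K_4$-free --- and a connected graph in which every maximal clique has cardinality three is certainly $K_4$-free. Under the stated hypothesis, $\X(\g)$ consists entirely of $3$-cliques (there are no maximal $2$-cliques, since every edge lies in some triangle by maximality together with connectedness, though one should be careful here --- see below). For each such $X$, $P_X \cong P_3 \cong F_2 \times \Z$, and modding out by the image of the central full twist gives $\bar{P}_X \cong F_2$, a free group. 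So the target $\prod_{X \in \X(\g)} \bar{P}_X$ is a product of free groups, putting us exactly in the setting required for Proposition~\ref{type}.

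First I would verify that $\L_\X = \L_\g$, the incidence graph of edges with $3$-cliques, is connected, which is what Proposition~\ref{proj} requires. Since \g\ is connected and every maximal clique is a triangle, every edge lies in at least one triangle (an edge in no triangle would be a maximal $2$-clique, contradicting the hypothesis that every maximal clique has cardinality three, assuming \g\ has an edge); connectedness of \g\ then transfers to connectedness of $\L_\g$ via a short argument tracing paths in \g\ through shared triangles. Granting this, Proposition~\ref{proj} shows that the kernel of $\rho_{\X(\g)}$ maps isomorphically onto the kernel of $\bar{\rho}_{\X(\g)}$; combined with the injectivity of $\rho_{\X(\g)}$ from Theorem~\ref{k4free}, we conclude $\bar{\rho}_{\X(\g)}$ is injective, identifying $\bar{P}_\g$ with its image $\bar{\rho}_{\X(\g)}(\bar{P}_\g)$ inside $\prod \bar{P}_X$.

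For part (i), when $b_1(\L_\g) = 0$, Proposition~\ref{norm} is the tool: distinct maximal $3$-cliques share at most one vertex, hence at most one edge (sharing two vertices would force a shared edge generating a common $2$-clique, but two triangles sharing an edge would produce a $K_4$ or a non-maximal configuration --- one must check $|X \cap X'| \le 2$ as \emph{edge sets}, i.e. $|\binom{X}{2} \cap \binom{X'}{2}| \le 1$, which holds since two triangles cannot share two edges without coinciding). Thus the hypothesis $|S \cap S'| \le 1$ of Proposition~\ref{norm} is met, and the cokernel of $\bar{\rho}_{\X(\g)}$ is free abelian of rank $b_1(\L_\g) = 0$, so $\bar{\rho}_{\X(\g)}$ is surjective. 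Being already injective, it is an isomorphism onto $\prod_{X} \bar{P}_X \cong (F_2)^m$, giving (i). For part (ii), with $b_1(\L_\g) > 0$ and no isthmuses in $\L_\g$, Proposition~\ref{type} applies directly: $\bar{\rho}_{\X(\g)}(\bar{P}_\g)$ is of type $FP_{m-1}$ but not $FP_m$ with $m = |\X(\g)|$ the number of $3$-cliques, and since $\bar{\rho}_{\X(\g)}$ is injective this finiteness statement transfers verbatim to $\bar{P}_\g$.

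The main obstacle I anticipate is purely bookkeeping about the incidence structure rather than anything deep: confirming that the abstract $|S \cap S'| \le 1$ condition in Proposition~\ref{norm}, which is phrased in terms of generator \emph{subsets} $S \subseteq Y$, correctly translates to the geometric statement that two distinct triangles in \g\ share at most one edge, and simultaneously that the combinatorial graph $\L_\X$ built from these subsets coincides with the edge--triangle incidence graph $\L_\g$ named in the theorem. One must take care that a maximal $3$-clique $X$ contributes the generator set $S = \{a_{ij} \mid \{i,j\} \in \binom{X}{2}\}$ of size three, so $|S \cap S'|$ counts \emph{shared edges}, not shared vertices --- two triangles meeting at a single vertex share no edges and give $|S \cap S'| = 0$, while two meeting along an edge give $|S \cap S'| = 1$, both satisfying the hypothesis. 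Once this identification is pinned down, the theorem follows formally from the three propositions with no further computation.
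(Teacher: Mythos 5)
Your proposal is correct and follows essentially the same route as the paper's own proof: injectivity of $\bar{\rho}_{\X(\g)}$ via Theorem~\ref{k4free} and Proposition~\ref{proj} (using connectedness of $\L_\g$), then Propositions~\ref{norm} and \ref{type} for parts (i) and (ii), with the key incidence check being exactly the paper's observation that two distinct 3-cliques share at most one \emph{edge}, so $|S\cap S'|\le 1$ for the generator subsets. Your passing claim that distinct maximal triangles share at most one vertex is false (two maximal triangles may share an edge without creating a $K_4$), but you correct this yourself in the same breath, and the argument as finally stated is sound.
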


\begin{proof} The hypotheses imply that \g\ is a $K_4$-free graph, that $\X=\X(\g)$ consists of 3-cliques, and that $\L_\g=\L_\X$. Since \g\ is connected and has no maximal 2-cliques, $\L_\X$ is connected. By the pure braid relations, the projectivized groups $\bar{P}_X$, $X \in \X$, are isomorphic to $F_2$, the rank-two free group. Then by Theorem~\ref{k4free} and Proposition~\ref{proj}, the homomorphism
\[
\bar{\rho}_\X \colon \bar{P}_\g \to \underset{X \in \X}{\displaystyle{\prod}} \bar{P}_X
\]
embeds $\bar{P}_\g$ into the product $(F_2)^m$ of free groups, where $m=|\X|$.

Since two 3-cliques share at most one edge, it is the case that $|S \cap S'|\leq 1$ for all $S, S' \in \X$, $S \neq S'$. Statements (i) and (ii) then follow from Propositions~\ref{norm} and \ref{type}. 
\end{proof}

The first conclusion above can be deduced from the main result of \cite{Fan97}, see also \cite[Section 5]{DenGarToh14}. 

The projection $P_\g \to \bar{P}_\g$ splits, so $P_\g \cong \bar{P}_\g \times \Z$,  yielding similar statements for $P_\g$.

\begin{cor} \label{types}  Let \g\ be a connected graph in which every maximal clique has cardinality 3. Let $m$ be the number of 3-cliques in \g, and let $\L_\g$ be the incidence graph of edges and 3-cliques in \g.
\begin{enumerate}
\item If $b_1(\L_\g)=0$ then $P_\g$ is isomorphic to $(F_2)^m \times \Z^m$.
\item If $b_1(\L_\g)>0$ and $\L_\g$ has no isthmuses, then $P_\g$ is of type $FP_{m-1}$ and not of type $FP_m$.
\end{enumerate}
\end{cor}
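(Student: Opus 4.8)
The plan is to derive both parts of the corollary directly from Theorem~\ref{graphtype}, transported across the central product splitting $P_\g \cong \bar{P}_\g \times \Z$ recorded just above the statement. First I would justify that splitting carefully. The element $z \in P_\g$ is central, being the image of the full twist $\Delta^2$, and its image in the abelianization $P_\g^{\mathrm{ab}} \cong \Z^{|E_\g|}$ is the sum of all the edge generators, hence a primitive vector in $\Z^{|E_\g|}$. Composing the abelianization map with an integral projection $\Z^{|E_\g|} \to \Z$ carrying that vector to $1$ produces a homomorphism $\phi \colon P_\g \to \Z$ with $\phi(z)=1$. Since $z$ is central, $g \mapsto (g\,z^{-\phi(g)}, \phi(g))$ defines an isomorphism $P_\g \xrightarrow{\ \sim\ } \ker\phi \times \langle z\rangle$, and $\ker\phi \cong P_\g/\langle z\rangle = \bar{P}_\g$; this is the asserted decomposition $P_\g \cong \bar{P}_\g \times \Z$.

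For part~(i) I would simply invoke Theorem~\ref{graphtype}(i): the hypothesis $b_1(\L_\g)=0$ gives $\bar{P}_\g \cong (F_2)^m$, and multiplying by the central infinite cyclic factor then yields the product decomposition of $P_\g$ asserted in~(i).

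For part~(ii) the only additional ingredient is that the homological finiteness type is insensitive to the $\Z$ factor: I would show that, for any group $A$, $A$ is of type $FP_m$ if and only if $A \times \Z$ is. The forward direction comes from tensoring (over $\Z$) the length-one free resolution $0 \to \Z[\Z] \to \Z[\Z] \to \Z \to 0$ with a projective $\Z A$-resolution of $\Z$; since all modules are $\Z$-free the K\"unneth theorem produces no $\mathrm{Tor}$ terms, so the total complex is a $\Z[A\times\Z]$-resolution of $\Z$, and it is finitely generated through degree $m$ whenever the $\Z A$-resolution is. The reverse direction holds because $A$ is a \emph{retract} of $A \times \Z$ (via the projection) and type $FP_m$ is inherited by retracts. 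Taking $A = \bar{P}_\g$ and feeding in Theorem~\ref{graphtype}(ii), which asserts $\bar{P}_\g$ is of type $FP_{m-1}$ but not $FP_m$, gives the same two conclusions for $P_\g = \bar{P}_\g \times \Z$.

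I expect the main point requiring care to be the negative half of~(ii): the claim that appending a $\Z$ factor cannot raise the finiteness type. This is precisely the reverse implication above---if $\bar{P}_\g \times \Z$ were of type $FP_m$, then so would be its retract $\bar{P}_\g$, contradicting Theorem~\ref{graphtype}(ii). Everything else is a formal transcription across the splitting, with no further input from the combinatorics of $\g$ or $\L_\g$ beyond what Theorem~\ref{graphtype} already provides.
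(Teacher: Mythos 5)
Your preliminary splitting argument is correct, and more careful than the paper, which simply asserts $P_\g \cong \bar{P}_\g \times \Z$ without proof. But the final step of your part~(i) is a non sequitur, and it conceals a real discrepancy. Theorem~\ref{graphtype}(i) together with the splitting gives $P_\g \cong (F_2)^m \times \Z$, with a \emph{single} central $\Z$ factor, and this group is not isomorphic to $(F_2)^m \times \Z^m$ once $m \ge 2$: the centers are $\Z$ versus $\Z^m$, and the abelianizations are $\Z^{2m+1}$ versus $\Z^{3m}$. So ``multiplying by the central infinite cyclic factor'' does not yield the decomposition asserted in~(i). In fact what you proved is the correct statement, and the stated conclusion is the one at fault: when $b_1(\L_\g)=0$ the incidence graph is a tree (it is connected under the hypotheses), so counting its edges gives $3m = |E_\g| + m - 1$, i.e.\ $|E_\g| = 2m+1$, while $H_1(P_\g) \cong \Z^{|E_\g|}$ since $U_\g$ is the complement of $|E_\g|$ hyperplanes; hence $P_\g$ cannot be $(F_2)^m \times \Z^m$ for $m \ge 2$. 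The paper's one-line proof of~(i) tacitly treats $\rho_{\X(\g)}$ as an isomorphism onto the full product $\prod_{X} P_X \cong (F_2 \times \Z)^m$ (the introduction makes the same claim), but only the projectivized map $\bar{\rho}_{\X}$ is surjective by Proposition~\ref{norm}; the image of $\rho_{\X(\g)}$ itself is the iterated pullback of Proposition~\ref{prop:slf}. Concretely, for two triangles sharing an edge ($m=2$) the pullback description gives $P_\g \cong F_2 \times F_2 \times \Z$, with abelianization $\Z^5 = \Z^{|E_\g|}$, not $(F_2)^2 \times \Z^2$. You should therefore prove, and state, part~(i) with $\Z$ in place of $\Z^m$ (the two agree only when $m=1$), and flag the error rather than silently assert agreement with the printed statement.

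Your part~(ii) is correct, and it takes a genuinely different route from the paper's. The paper deduces~(ii) from \cite{MMV01}, computing with the $\Sigma$-invariants of the right-angled Artin group $(F_2)^m \times \Z^m$ and observing that the relevant clique complexes for $(F_2)^m$ and $(F_2)^m \times \Z^m$ have the same connectivity; you instead transport Theorem~\ref{graphtype}(ii) across the splitting, using that type $FP_m$ is preserved under direct product with $\Z$ (your K\"unneth argument is fine) and is inherited by retracts. Your route is cleaner and avoids the BNSR machinery entirely; the one point to shore up is retract inheritance of $FP_m$, which is true but not formal --- it deserves a citation (it follows, for instance, from Alonso's theorem that finiteness properties pass to quasi-retracts, or from Brown's homological finiteness criterion) rather than a bare assertion. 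Note also that your argument for~(ii) uses only the single-$\Z$ splitting, so it is unaffected by the correction to~(i).
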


\begin{proof} The groups $P_X$, $X \in \L_\g$, are isomorphic to $F_2 \times \Z$, yielding the first statement. The second follows from \cite{MMV01}, with the observation that the clique complexes of the graphs associated to the right-angled Artin groups $(F_2)^m$ and $(F_2)^m \times \Z^m$ have the same connectivity.
\end{proof}

Recall that a hyperplane arrangement \A\ in $\C^\ell$ is called a {\em $K(\pi,1)$ arrangement} if its complement $U_\A=\C^\ell \setminus \bigcup_{H \in \A} H$ is an aspherical space.

\begin{cor}\label{kpi1} Suppose \g\ is a $K_4$-free graph. Let $\L_\g$ be the incidence graph of edges and 3-cliques in \g. Let $\A_\g$ be the graphic arrangement associated with \g. If $\L_\g$ contains a cycle, then $\A_\g$ is not a $K(\pi,1)$ arrangement.
\end{cor}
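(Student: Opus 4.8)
The plan is to argue by contraposition through homological finiteness. The key geometric input is that the complement $U_\g$ of any complex hyperplane arrangement has the homotopy type of a finite CW complex, being a smooth affine variety, hence a Stein manifold with the homotopy type of a finite complex of real dimension at most $n$. Consequently, if $\A_\g$ were a $K(\pi,1)$ arrangement, then $U_\g$ would be a finite $K(P_\g,1)$, so $P_\g$ would be of type $FP_\infty$; in particular $H_i(P_\g;\Z)$ would be finitely generated for every $i$. I would derive a contradiction by showing that a cycle in $\L_\g$ forces $P_\g$ to fail $FP_\infty$.

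First I would reduce to the case that \g\ is connected. If \g\ decomposes as a disjoint union of induced subgraphs on disjoint vertex sets, then $\A_\g$ is the product of the corresponding arrangements, together with free coordinates for isolated vertices, so $U_\g$ is a product of their complements with a Euclidean factor; such a product is aspherical if and only if each factor is. Since a cycle of $\L_\g$ is supported on a single connected component $\g_0$, it suffices to show $\A_{\g_0}$ is not $K(\pi,1)$, so I may assume \g\ is connected.

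Next I would exhibit the finiteness obstruction using the machinery of Section~\ref{finiteness}. Taking $\X=\X(\g)$, Theorem~\ref{k4free} gives injectivity of $\rho_\X$, and for a $2$-clique $X$ the projectivized factor $\bar P_X$ is trivial, while for a $3$-clique $\bar P_X\cong F_2$. Thus $\prod_{X\in\X}\bar P_X\cong (F_2)^m$, where $m$ is the number of $3$-cliques, and Propositions~\ref{proj} and~\ref{norm} should realize $\bar P_\g$ as the kernel of a surjection $(F_2)^m\to\Z^{b}$ with $b=b_1(\L_\X)=b_1(\L_\g)$; here the maximal $2$-cliques contribute only isolated pendant edges to $\L_\X$, which do not affect $H^1$. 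Since $\L_\g$ contains a cycle, $b\ge 1$. The right-angled Artin group $(F_2)^m$ has flag complex the join of $m$ copies of $S^0$, namely the $(m-1)$-sphere; as this is not acyclic, the Bestvina--Brady/Meier--Meinert--VanWyk theory underlying Proposition~\ref{type} should show that the kernel of the resulting nonzero character is not of type $FP_\infty$, its homology being infinitely generated in the degree where the flag complex first fails to be acyclic. Hence $\bar P_\g$, and therefore $P_\g\cong\bar P_\g\times\Z$, is not $FP_\infty$, contradicting the first paragraph.

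The main obstacle is that this last step goes beyond the literal hypotheses of Corollary~\ref{types}, which assumes that every maximal clique is a $3$-clique and that $\L_\g$ has no isthmuses. Neither can be arranged in general: for the wheel $W_4$, a hub joined to a $4$-cycle, every maximal clique is a triangle and $\L_\g$ has a cycle, yet each rim edge lies in a single triangle and so contributes an isthmus to $\L_\g$, and no induced subgraph improves matters. Thus I cannot simply cite Corollary~\ref{types}, nor reduce to it through a localization, which would in any case require that localizations of $K(\pi,1)$ arrangements remain $K(\pi,1)$. Instead the crux is to extract from the proof of Proposition~\ref{type} the weaker, isthmus-insensitive conclusion that $b_1(\L_\g)\ge 1$ already obstructs $FP_\infty$ for the character-kernel $\bar P_\g$. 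Verifying that the character furnished by Proposition~\ref{norm} genuinely detects the top nonvanishing homology of the octahedral flag complex, and ensuring that $\L_\X$ is connected (by first passing to the cluster of triangles supporting the cycle, so that the embedding of $\bar P_\g$ as a character-kernel via Proposition~\ref{proj} is valid), is where the real work lies.
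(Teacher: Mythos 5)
Your strategy is genuinely different from the paper's, and your instinct about where the difficulty lies is sound, but your write-up stops short of a proof at exactly the point you flag. The paper's actual argument is shorter and goes through localization: it passes to a subgraph $\g'$ of \g\ satisfying the hypotheses of Theorem~\ref{graphtype}, observes that $P_{\g'}$ is then not of finite type while $U_{\g'}$ has the homotopy type of a finite complex (so $\A_{\g'}$ is not $K(\pi,1)$), and concludes using the fact that localizations of $K(\pi,1)$ arrangements are again $K(\pi,1)$, citing \cite{Par00,Fa95}. So the heredity statement you were reluctant to assume (``which would in any case require that localizations of $K(\pi,1)$ arrangements remain $K(\pi,1)$'') is a known theorem and is precisely what the paper invokes; your hesitation there was unfounded. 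On the other hand, your $W_4$ example is a serious and apt observation: in any subgraph of the $4$-wheel each rim edge lies in at most one triangle, so no subgraph (in particular no induced subgraph, which is what makes the localization step legitimate) satisfies both the isthmus-free condition and $b_1>0$ required by Theorem~\ref{graphtype}(ii). Thus your example stresses not only your own reduction but also the paper's opening sentence, which asserts such a $\g'$ exists; for $W_4$ one must argue differently, e.g., by localizing at the chordless rim $4$-cycle, whose graphic arrangement is a generic rank-$3$ arrangement of four hyperplanes, hence not $K(\pi,1)$.

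The genuine gap in your proposal is that the crux --- that $b_1(\L_\g)\ge 1$ alone, without isthmus-freeness, obstructs $FP_\infty$ for $\bar P_\g$ --- is only asserted (``where the real work lies''), not proved, so as written you have no proof in exactly the cases (like $W_4$) where you cannot cite Corollary~\ref{types}. The claim is in fact correct and fillable along the lines you sketch: Proposition~\ref{norm} carries no isthmus hypothesis, so via Theorem~\ref{k4free} and Proposition~\ref{proj} the group $\bar P_\g$ sits as a normal subgroup of $(F_2)^m$ with quotient $\Z^{b}$, $b\ge 1$, and by the computation of the $\Sigma$-invariants of right-angled Artin groups in \cite{MMV01}, no nonzero character of $(F_2)^m$ lies in $\Sigma^m$: the living subcomplex is a full subcomplex of the octahedral $(m-1)$-sphere, and either it is the whole sphere, which is not $(m-1)$-acyclic, or some dead vertex violates the link condition. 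Hence every normal subgroup of $(F_2)^m$ with nontrivial free abelian quotient fails $FP_m$, isthmuses or not; isthmuses only affect which $FP_k$ fails first, never the failure of $FP_\infty$. You would still need to repair the connectivity hypothesis of Proposition~\ref{proj} honestly: your proposed fix of ``passing to the cluster of triangles supporting the cycle'' reintroduces the localization problem unless that cluster's edge set is a flat of the cycle matroid, and it is cleaner either to treat the components of $\L_{\X}$ separately (the kernel of $\bar\rho_{\X}$ is then a finitely generated central subgroup, which does not affect $FP_\infty$) or to accept the localization theorem of \cite{Par00,Fa95} and localize at the flat spanned by the component carrying the cycle.
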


\begin{proof} By deleting edges from \g, one can obtain an induced subgraph $\g'$ of \g\ satisfying the hypothesis of Theorem~\ref{graphtype}. The corresponding graphic arrangement $\A_{\g'}$ is a localization of $\A_\g$, and is not a $K(\pi,1)$ arrangement because $P_{\g'}=\pi_1(U_{\g'})$ is not of finite type, while the arrangement complement $U_{\g'}$ has the homotopy type of a finite complex. It follows that $\A_\g$ is not $K(\pi,1)$ - see \cite{Par00,Fa95}.
\end{proof}

\section{Graphic braid groups}
\label{full}
The full and pure braid groups are related by the short exact sequence
\[
1 \to P_n \to B_n \overset{p}{\to} S_n \to 1.
\]
Proposition~\ref{conj} yields the following description of the conjugation action of $B_n$ on $P_n$.

\begin{prop}\label{normalize} Let $\sigma \in B_n$ and let $\s=p(\sigma) \in S_n$. Let $1 \leq i <j \leq n$, and let $\{\s(i),\s(j)\}=\{r,s\}$ with $1 \leq r < s \leq n$. Then $\sigma a_{ij} \sigma^{-1}$ is a conjugate of $a_{rs}$ by an element of $P_n$.
\end{prop}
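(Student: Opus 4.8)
The plan is to derive this from Proposition~\ref{conj}, which gives the explicit action of each braid generator $\sigma_k$ on the pure braid generators $a_{ij}$. The key structural observation is that in every case of that proposition, $\sigma_k a_{ij} \sigma_k^{-1}$ is a \emph{$P_n$-conjugate} of some $a_{rs}$, where $\{r,s\} = \{\s_k(i), \s_k(j)\}$ and $\s_k = (k,k+1)$ is the transposition induced by $\sigma_k$. Indeed, inspecting the five cases: when $k=i-1$ we get $a_{i-1,i} a_{i-1,j} a_{i-1,i}^{-1}$, a conjugate of $a_{i-1,j}$ and $\s_k$ sends $\{i,j\}$ to $\{i-1,j\}$; when $k=i<j-1$ we get $a_{i+1,j}$ with $\s_k\{i,j\}=\{i+1,j\}$; the cases $k=j-1>i$ and $k=j$ are symmetric; and otherwise $\sigma_k$ fixes $a_{ij}$ and $\s_k$ fixes $\{i,j\}$. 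So the statement holds for each \emph{generator} $\sigma_k$.

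First I would state precisely the property to be propagated: for every $\sigma \in B_n$ with $\s = p(\sigma)$, and every $i<j$, writing $\{r,s\}=\{\s(i),\s(j)\}$ with $r<s$, the element $\sigma a_{ij}\sigma^{-1}$ is a $P_n$-conjugate of $a_{rs}$. I would then prove this by induction on the word length of $\sigma$ in the generators $\sigma_k^{\pm 1}$. The base case $\sigma = 1$ is trivial, and the single-generator case is the case analysis above (noting that the same formulas, read backwards, handle $\sigma_k^{-1}$ since $\s_k^{-1}=\s_k$).

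For the inductive step, suppose $\sigma = \sigma_k^{\epsilon} \tau$ with $\tau$ shorter, $\epsilon = \pm 1$, and $p(\tau) = \s'$. By induction, $\tau a_{ij} \tau^{-1} = g\, a_{r's'}\, g^{-1}$ for some $g \in P_n$, where $\{r',s'\} = \{\s'(i), \s'(j)\}$. Conjugating by $\sigma_k^{\epsilon}$ gives
\[
\sigma a_{ij} \sigma^{-1} = (\sigma_k^{\epsilon} g \sigma_k^{-\epsilon})\,(\sigma_k^{\epsilon} a_{r's'} \sigma_k^{-\epsilon})\,(\sigma_k^{\epsilon} g \sigma_k^{-\epsilon})^{-1}.
\]
Since $P_n$ is normal in $B_n$, the outer factor $h = \sigma_k^{\epsilon} g \sigma_k^{-\epsilon}$ lies in $P_n$. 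By the generator case applied to $a_{r's'}$, the middle factor equals $g'\, a_{rs}\, g'^{-1}$ for some $g' \in P_n$, where $\{r,s\} = \{\s_k(r'), \s_k(s')\} = \{\s_k\s'(i), \s_k\s'(j)\} = \{\s(i),\s(j)\}$, using $p(\sigma) = \s_k \s'$. Hence $\sigma a_{ij}\sigma^{-1} = (hg')\, a_{rs}\, (hg')^{-1}$ with $hg' \in P_n$, completing the induction.

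The only mild subtlety — and the step I expect to require the most care — is the bookkeeping showing that the induced permutation of the subscript pair composes correctly, i.e.\ that $\{\s(i),\s(j)\} = \{\s_k\s'(i),\s_k\s'(j)\}$ and that the $a_{rs}$ emerging from the inductive hypothesis is exactly the one whose subscripts $\s_k$ acts on in the generator case. This is a routine but essential consistency check ensuring the permutation tracking in Proposition~\ref{conj} matches the composition $p(\sigma) = p(\sigma_k^\epsilon)\,p(\tau)$. Everything else reduces to the normality of $P_n$ in $B_n$, which is used repeatedly to keep the conjugating elements inside $P_n$.
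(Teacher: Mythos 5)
Your proposal is correct and takes essentially the same route as the paper, whose entire proof is the one-line observation that the statement holds for the generators $\sigma_k$ by Proposition~\ref{conj} and then follows because the conjugation action is homomorphic --- exactly the induction on word length (with normality of $P_n$ in $B_n$ handling the conjugating elements and the inverse generators) that you spell out explicitly. One incidental remark: in the case $k=j$ the table in Proposition~\ref{conj} reads $a_{j,j+1}$, an apparent typo for $a_{i,j+1}$ (e.g.\ $\sigma_2^{}a_{12}^{}\sigma_2^{-1}=a_{13}$), and your ``symmetric'' reading of that case is the correct one.
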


\begin{proof} The statement holds for the generators $\sigma_i$ of $B_n$, by Proposition~\ref{conj}, and that implies the result since the conjugation action is homomorphic.
\end{proof}

Let \g\ be a simple graph on vertex set $V=[n]$ and edge set $E_\g \subseteq 2^V$  as before. The automorphism group of \g\ is the subgroup $\Aut(\g)$ of $S_n$ consisting of those elements $\s$ for which $\{\s(i),\s(j)\} \in E_\g$ for all $\{i,j\} \in E_\g$. Recall the graphic pure braid group $P_\g$ is the quotient of $P_n$ by the normal subgroup $N$ normally generated by $\{a_{ij} \mid \{i,j\}  \in \bar{E}_\g\}$, where $\bar{E}_\g$ is the complement of $E_\g$ in $\binom{V}{2}$. Let $\tilde{B}_\g = p^{-1}(\Aut(\g)) \subseteq B_n$. Then $P_n \subseteq \tilde{B}_\g$.

\begin{cor} \label{normal} The normal subgroup $N$ of $P_n$ is normal in $\tilde{B}_\g$.
\end{cor}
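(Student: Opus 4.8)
Corollary~\ref{normal} asserts that the normal subgroup $N = \langle\langle a_{ij} \mid \{i,j\} \in \bar{E}_\g \rangle\rangle$ of $P_n$ is in fact normal in the larger group $\tilde{B}_\g = p^{-1}(\Aut(\g))$.

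**The plan.** Since $N$ is normal in $P_n$ and $P_n \subseteq \tilde{B}_\g$, it suffices to show that conjugation by any element $\sigma \in \tilde{B}_\g$ carries each of the normal generators $a_{ij}$, $\{i,j\} \in \bar{E}_\g$, back into $N$. Because conjugation is a homomorphism and $N$ is already $P_n$-invariant, I do not need to track the full image of the generators — only that $\sigma a_{ij} \sigma^{-1}$ lands in $N$. The key input is Proposition~\ref{normalize}: for $\sigma \in B_n$ with $\s = p(\sigma)$, if $\{\s(i),\s(j)\} = \{r,s\}$, then $\sigma a_{ij} \sigma^{-1}$ is a $P_n$-conjugate of $a_{rs}$.

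**Carrying it out.** First I would fix $\sigma \in \tilde{B}_\g$, so that $\s = p(\sigma) \in \Aut(\g)$, and fix a normal generator $a_{ij}$ with $\{i,j\} \in \bar{E}_\g$. Writing $\{r,s\} = \{\s(i),\s(j)\}$, Proposition~\ref{normalize} gives $\sigma a_{ij} \sigma^{-1} = w\, a_{rs}\, w^{-1}$ for some $w \in P_n$. The crucial observation is that $\s$ is a graph automorphism, so it permutes $E_\g$ and hence also permutes the non-edges $\bar{E}_\g$; therefore $\{r,s\} = \{\s(i),\s(j)\} \in \bar{E}_\g$ as well. Thus $a_{rs}$ is itself one of the normal generators of $N$, so $a_{rs} \in N$, and since $N$ is normal in $P_n$ we have $w\, a_{rs}\, w^{-1} \in N$. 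This shows $\sigma a_{ij} \sigma^{-1} \in N$ for every generator of $N$, and since conjugation by $\sigma$ is an automorphism of $P_n$ fixing the full generating set of $N$ inside $N$, it maps $N$ into itself. As $\sigma$ was arbitrary in $\tilde{B}_\g$, the subgroup $N$ is normal in $\tilde{B}_\g$.

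**Main obstacle.** There is no serious obstacle here: the proof is essentially a bookkeeping argument combining Proposition~\ref{normalize} with the definition of $\Aut(\g)$. The only point requiring mild care is the direction of the automorphism condition — one must check that $\s \in \Aut(\g)$ permutes $\bar{E}_\g$ (not merely $E_\g$), which is immediate since an automorphism preserving $E_\g$ bijectively must preserve its complement in $\binom{V}{2}$. Everything else follows formally from the normality of $N$ in $P_n$.
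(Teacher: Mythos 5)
Your proof is correct and takes essentially the same approach as the paper: fix $\sigma \in \tilde{B}_\g$ and a normal generator $a_{ij}$ with $\{i,j\} \in \bar{E}_\g$, observe that $\s = p(\sigma) \in \Aut(\g)$ sends non-edges to non-edges, and apply Proposition~\ref{normalize} to conclude $\sigma a_{ij} \sigma^{-1}$ is a $P_n$-conjugate of a generator $a_{rs}$ of $N$, hence lies in $N$. The paper's proof is just a terser version of the same argument, leaving implicit the routine points you spell out (that $\Aut(\g)$ preserves the complement $\bar{E}_\g$, and that conjugation by $\sigma$, being an automorphism of $P_n$, sends the normal closure $N$ into itself once it sends the generating set into $N$).
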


\begin{proof} Let $\sigma \in \tilde{B}_\g$ and $1 \leq i<j \leq n$ with $\{i,j\} \in \bar{E}_\g$. Let $\s=p(\sigma)$. Then $\s \in \Aut(\g)$, so $\{\s(i),\s(j)\} \in \bar{E}_\g$. The statement then follows from Proposition~\ref{normalize}.
\end{proof}

\begin{dfn} The graphic braid group $B_\g$ associated with \g\ is the quotient $\tilde{B}_\g/N$ of $\tilde{B}_\g=p^{-1}\left(\Aut(\g)\right)$ by the normal subgroup $N$ normally generated by the pure braids $a_{ij}$, $\{i,j\}  \in \bar{E}_\g$.
\end{dfn}

\begin{thm} There is a short exact sequence
\[
1 \to P_\g \to B_\g \to \Aut(\g) \to 1.
\]
\end{thm}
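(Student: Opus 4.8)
The plan is to obtain the short exact sequence from the exact sequence $1 \to P_n \to \tilde{B}_\g \to \Aut(\g) \to 1$ by quotienting the two left-hand terms by the normal subgroup $N$. By Corollary~\ref{normal}, $N$ is normal in $\tilde{B}_\g$, and by construction $N \subseteq P_n$, so the quotients $P_\g = P_n/N$ and $B_\g = \tilde{B}_\g/N$ are both defined, with $P_\g$ a (normal) subgroup of $B_\g$. The key observation is that $N$ lies entirely inside $P_n$, so passing to the quotient by $N$ does not disturb the rightmost term $\Aut(\g)$.

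First I would establish the exactness of $1 \to P_n \to \tilde{B}_\g \to \Aut(\g) \to 1$. This is immediate from the definitions: $\tilde{B}_\g = p^{-1}(\Aut(\g))$, so the restriction of $p \colon B_n \to S_n$ to $\tilde{B}_\g$ surjects onto $\Aut(\g)$, and its kernel is $\ker(p) \cap \tilde{B}_\g = P_n \cap \tilde{B}_\g = P_n$, using that $P_n = \ker(p) \subseteq \tilde{B}_\g$ as noted in the excerpt.

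Next I would apply the third isomorphism theorem. Since $N \trianglelefteq \tilde{B}_\g$ (Corollary~\ref{normal}) and $N \trianglelefteq P_n$ with $N \subseteq P_n \trianglelefteq \tilde{B}_\g$, the quotient $B_\g = \tilde{B}_\g/N$ contains $P_\g = P_n/N$ as a normal subgroup, and
\[
B_\g/P_\g = (\tilde{B}_\g/N)\big/(P_n/N) \cong \tilde{B}_\g/P_n \cong \Aut(\g).
\]
This yields the asserted short exact sequence $1 \to P_\g \to B_\g \to \Aut(\g) \to 1$, where the surjection $B_\g \to \Aut(\g)$ is induced by $p$.

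There is no serious obstacle here; the entire content is the fact that $N$ is normal in $\tilde{B}_\g$, which has already been secured in Corollary~\ref{normal}. The only point requiring slight care is confirming that the induced map $B_\g \to \Aut(\g)$ is well defined, i.e.\ that $p(N)$ is trivial — which holds because $N \subseteq P_n = \ker(p)$ — so that $p$ factors through the quotient by $N$. Everything else is a routine application of standard isomorphism theorems.
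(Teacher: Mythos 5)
Your proposal is correct and follows exactly the route the paper takes: the paper's proof is the single line ``This follows immediately from Corollary~\ref{normal},'' and your argument simply spells out the routine details left implicit there (exactness of $1 \to P_n \to \tilde{B}_\g \to \Aut(\g) \to 1$, the containment $N \subseteq P_n = \ker(p)$, and the third isomorphism theorem). No gaps; your write-up is a fully expanded version of the paper's proof.
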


\begin{proof} This follows immediately from Corollary~\ref{normal}.
\end{proof}

\begin{rem} The group $\Aut(\g)$ acts on the graphic arrangement complement $U_\g$. The fundamental group of the space of orbits $\CC_n^\g$ can be interpreted as the group of braids that permute their endpoints by automorphisms of \g, modulo level-preserving isotopy, with the strands corresponding to non-edges of \g\ allowed to cross. The action of $\Aut(\g)$ on $U_\g$ is not free in general, but one can interpret $B_\g$ as the orbifold fundamental group of the orbit space $\CC_n^\g$, 
giving, as in \cite{Looi08}, a topological interpretation of the short exact sequence above:
\[
1 \to \pi_1(U_\g) \to \pi_1^{\rm orb}(\CC_n^\g) \to \Aut(\g) \to 1.
\]
\end{rem}

\begin{ex} Let \g\ be the graph with vertex set $V=[3]$ and edge set $E_\g=\left\{\{1,2\},\{1,3\}\right\}$. The graphic arrangement complement $U_\g$ is $\{(x,y,z) \in \C^3 \mid x \neq z, y \neq z\}$, homeomorphic to $(\C^\times)^2 \times \C$. The automorphism group $\Aut(\g)$ is generated by the transposition $\s_2=(23)$; the points of $U_\g$ on the plane $y=z$ are fixed by $\Aut(\g)$.

The group $\tilde{B}_\g$ is generated by $\{a_{12},a_{13},\sigma_2\}$, and the graphic braid group $B_\g$ is the quotient of this group by the normal subgroup $N$ of $P_3$ generated by $a_{23}=\sigma_2^2$. 
By Propositions~\ref{conj} and \ref{comm}, $\sigma_2 a_{12} \sigma_2^{-1}=a_{13}$ and $\sigma_2 a_{13} \sigma_2^{-1} = a_{12}$, and $[a_{12},a_{13}]=1$, modulo $N$. Thus $\sigma_2$ represents an element of order two in $B_\g$. One easily shows $B_\g$ is isomorphic to $\Z^2 \rtimes \Z_2$. Then $B_\g$ acts by isometries on the euclidean plane, faithfully, properly, and cocompactly, with finite stabilizers, as the group of symmetries of the tiling of the plane by isosceles right triangles.
\end{ex}

The preceding example shows that $B_\g$ need not be torsion-free. The same phenomenon occurs for any graph \g\ having an automorphism that interchanges two non-adjacent vertices. Higher-order torsion arises in the same way in many examples: for instance, if \g\ is the $n$-wheel then $B_\g$ has elements of order $n$. Thus $B_\g$ is not residually free in general, and may have infinite cohomological dimension; it is not clear if the latter can be the case for $P_\g$. The results of the preceding section do apply to $B_\g$.

\begin{thm} Let \g\ be a connected graph with every maximal clique of cardinality 3. Let $m$ be the number of 3-cliques in \g, and let $\L_\g$ be the incidence graph of edges and 3-cliques in \g. Assume $\L(\g)$ has no isolated vertices. If $b_1(\L)>0$ then $B_\g$ is of type $FP_{m-1}$ and not of type $FP_m$.
\end{thm}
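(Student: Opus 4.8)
The plan is to reduce the finiteness-type statement for $B_\g$ to the corresponding statement for $P_\g$, which was already established in Corollary~\ref{types}, by exploiting the fact that $P_\g$ sits inside $B_\g$ as a finite-index subgroup. First I would observe that the short exact sequence
\[
1 \to P_\g \to B_\g \to \Aut(\g) \to 1
\]
realizes $P_\g$ as a normal subgroup of $B_\g$ of index $|\Aut(\g)|$, which is finite because $\Aut(\g)$ is a subgroup of the finite group $S_n$. The homological finiteness type $FP_m$ is commensurability-invariant: a group is of type $FP_m$ if and only if any (equivalently, some) finite-index subgroup is. This is standard, following from the fact that a finite-index subgroup $H \le G$ satisfies that $\Z[G]$ is finitely generated and free as a $\Z[H]$-module, so finiteness properties of a resolution transfer in both directions.

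The key steps, in order, are then as follows. First I would verify the hypotheses needed to invoke Corollary~\ref{types} for $P_\g$: the graph \g\ is connected, every maximal clique has cardinality $3$, and (from the assumption that $\L_\g$ has no isolated vertices together with $b_1(\L_\g) > 0$) the incidence graph contains a cycle. I should note that ``no isolated vertices'' in $\L_\g$ ensures that the relevant combinatorial condition of Proposition~\ref{type}---that $\L_\X$ have no isthmuses, or at least that the betti number computation goes through---is met, or else reconcile the stated hypothesis with the isthmus-free condition appearing in Corollary~\ref{types}\,(ii). Second, Corollary~\ref{types}\,(ii) gives that $P_\g$ is of type $FP_{m-1}$ and not of type $FP_m$. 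Third, I would apply the commensurability invariance of $FP_m$: since $P_\g$ has finite index in $B_\g$, the group $B_\g$ is of type $FP_{m-1}$ (because $P_\g$ is) and is not of type $FP_m$ (because if $B_\g$ were, then its finite-index subgroup $P_\g$ would be as well, a contradiction).

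The main obstacle I anticipate is not the transfer argument, which is routine once the finite-index claim is in place, but rather matching the precise combinatorial hypotheses. The statement here assumes $\L(\g)$ has no isolated vertices and $b_1(\L) > 0$, whereas Corollary~\ref{types}\,(ii) is phrased in terms of $b_1(\L_\g) > 0$ together with $\L_\g$ having no isthmuses. I would need to confirm that under the present hypotheses the invariant $m = |\X(\g)|$ equals the number of $3$-cliques and that the no-isthmus (or equivalently, the cycle-containing and connectivity) conditions of Proposition~\ref{type} are genuinely satisfied, so that the ``not $FP_m$'' half is valid. Modulo this bookkeeping, the proof is short: invoke Corollary~\ref{types} and then quote the standard commensurability invariance of the finiteness properties $FP_m$ (see \cite{Br82}).
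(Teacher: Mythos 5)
Your proposal is correct and is essentially the paper's own proof: the paper likewise observes that $P_\g$ has finite index in $B_\g$ and cites Corollary~\ref{types} together with \cite[Prop.\ 5.1]{Br82} for the commensurability invariance of type $FP_m$. The hypothesis mismatch you flag (``no isolated vertices'' versus the ``no isthmuses'' condition of Corollary~\ref{types}\,(ii)) is a genuine wrinkle in the paper's own statement, not a defect of your argument, and your handling of it is appropriately cautious.
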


\begin{proof} Since $P_\g$ is a subgroup of finite index in $B_\g$, the statement follows from Corollary~\ref{types} and \cite[Prop. 5.1]{Br82}.
\end{proof}

\begin{ack} This research began with work by Anthony Caine and Daniel Malcolm, undergraduate research students of the second-named author at NAU in 2012 and 2015, respectively. We acknowledge their contributions. The manuscript was prepared while the second author was in residence at IMAG at University of Montpellier; he thanks the institute and Cl\'ement Dupont for hospitality, and Dupont and Thomas Haettel for helpful conversations.
\end{ack}

\providecommand{\bysame}{\leavevmode\hbox to3em{\hrulefill}\thinspace}

\end{document}